\newtheorem{thm}{Theorem}[section]
\newtheorem{problem}[thm]{Problem}
\newtheorem{prop}[thm]{Proposition}
\newtheorem{ass}[thm]{Assumption}
\newtheorem{lem}[thm]{Lemma}
\newtheorem{defn}[thm]{Definition}
\newtheorem{rem}[thm]{Remark}
\newcommand{\AC}{\mathrm{AC}}
\newcommand{\bgamma}{\boldsymbol \gamma}
\newcommand{\bGamma}{\boldsymbol \Gamma}
\newcommand{\bmu}{\boldsymbol \mu}
\newcommand{\bnu}{\boldsymbol \nu}
\newcommand{\brho}{\boldsymbol \rho}
\newcommand{\bW}{\mathbf W}
\newcommand{\rC}{\mathrm{C}}
\newcommand{\col}{\mathrm{col}}
\newcommand{\de}{\mathrm{d}}
\newcommand{\sfd}{\mathsf{d}}
\renewcommand{\div}{\operatorname{div}}
\newcommand{\rL}{\mathrm{L}}
\newcommand{\Lip}{\operatorname{Lip}}
\newcommand{\loc}{\operatorname{loc}}
\newcommand{\N}{\mathbb N}
\newcommand{\R}{\mathbb R}
\newcommand{\supp}{\operatorname{supp}}
\newcommand{\weakto}{\rightharpoonup}
\newcommand{\xto}[1]{\xrightarrow{ #1 }}
\begin{document}

\title[Many-particle limits including annihilation]{Discrete-to-continuum limits of particles with an annihilation rule} % must have a running title of no more than 50 words
\author{Patrick van Meurs}
\address{Institute of Liberal Arts and Science,
Kanazawa University,
Kakuma, Kanazawa, 920-1192, 
Japan}
\email[P.~van Meurs]{pjpvmeurs@staff.kanazawa-u.ac.jp}
\author{Marco Morandotti}
\address{Dipartimento di Scienze Matematiche ``G.~L.~Lagrange'', Politecnico di Torino, Corso Duca degli Abruzzi, 24, 10129 Torino, Italy}
\email[M.~Morandotti]{marco.morandotti@polito.it}
\date{\today}

\begin{abstract}
In the recent trend of extending discrete-to-continuum limit passages for gradient flows of single-species particle systems with singular and nonlocal interactions to particles of opposite sign, any annihilation effect of particles with opposite sign has been side-stepped. We present the first rigorous discrete-to-continuum limit passage which includes annihilation. This result paves the way to applications such as vortices, charged particles, and dislocations.
In more detail, the discrete setting of our discrete-to-continuum limit passage is given by particles on the real line. 
Particles of the same type interact by a singular interaction kernel; those of opposite sign interact by a regular one. 
If two particles of opposite sign collide, they annihilate, \emph{i.e.}, they are taken out of the system. 
The challenge for proving a discrete-to-continuum limit is that annihilation is an intrinsically discrete effect where particles vanish instantaneously in time, while on the continuum scale the mass of the particle density decays continuously in time.
The proof contains two novelties: (i) the observation that empirical measures of the discrete dynamics (with annihilation rule) satisfy the continuum evolution equation that only implicitly encodes annihilation, and (ii) the fact that, by imposing a relatively mild separation assumption on the initial data, we can identify the limiting particle density as a solution to the same continuum evolution equation. % 222 / 250 words
\end{abstract}

\maketitle

\noindent \textbf{Keywords}: {Particle system, discrete-to-continuum asymptotics, annihilation, gradient flows.} \\
\textbf{2010 MSC}: {
  82C22, %Interacting particle systems [See also 60K35]
  (82C21, %Dynamic continuum models (systems of particles, etc.)
  35A15, % PDE: variational methods
  74G10). % Analytic approximation of solutions (perturbation methods, asymptotic methods, series, etc.)
}
%82B21 % continuum models 
%82D35: application to metals
%74Q05, % Homogenization in equilibrium problems
%82C21 Dynamic continuum models (systems of particles, etc.)
%82C22 Interacting particle systems [See also 60K35]

%\PATRICK{[ed.: Please check the Table of contents for the names of all our sections. When satisfied, please remove the TOC]}

%\tableofcontents

\section{Introduction}\label{sect:intro}

A recent trend in discrete-to-continuum limit passages in overdamped particle systems with singular and nonlocal interactions (with applications to, \emph{e.g.}, vortices 
\cite{Schochet96,
Hauray09,
Duerinckx16}, 
charged particles
\cite{SandierSerfaty152D}, 
dislocations
\cite{HallChapmanOckendon10,
LMSZ18,
MoraPeletierScardia17}, 
and dislocation walls
\cite{GeersPeerlingsPeletierScardia13,
VanMeursMunteanPeletier14,
VanMeursMuntean14}) 
is to extend such results to two-species particle systems. The singularity in the interaction potential imposes the immediate problem that the evolution of the particle system is only defined up to the first collision time between particles of opposite sign. This problem is dealt with by either \emph{regularising} the singular interaction potential (see 
\cite{GarroniLeoniPonsiglione10,
GarroniVanMeursPeletierScardia18prep}) 
or by limiting the geometry such that particles of opposite sign cannot collide (see 
\cite{ChapmanXiangZhu15,
vanMeurs18}). 
However, more realistic models of vortices, charged particles, and dislocations include the \emph{annihilation} of particles of opposite sign. While annihilation has been analysed on the discrete scale
\cite{SmetsBethuelOrlandi07,
Serfaty07II} 
and continuum scale 
\cite{BilerKarchMonneau10,
AmbrosioMaininiSerfaty11} 
separately, there is no rigorous discrete-to-continuum limit passage known between these two scales.

The main result in this paper establishes the first result on a discrete-to-continuum limit passage in two-species particle systems in one dimension with annihilation.
%\MARCO{on one side, I'm happy to give the reference, on the other, I feel that the parentheses break the flow a little bit. I'm fine with any solution}
%\footnote{MP12a is close to our bed, but we'll argue in 1.x why it is not exactly a DtC result.}

After introducing the discrete %(as in finite number of particles) 
and continuum problems, we present our main result on the connection between them, \emph{i.e.}, the limit as the number of particles $n$ tends to $\infty$. Then, we put our discrete and continuum problems in the perspective of the literature, and comment how our proof combines known techniques with novel ideas. 
We conclude with an exposition of possible extensions to work towards singular interspecies interactions and higher dimensions.

\subsection{The discrete problem (particle system with annihilation)}
\label{s:in:Pn}

We introduce our discrete evolution problem by first specifying the state of the system, then the related interaction energy, and finally the evolution law. 
The state of the system is described by $x \coloneqq (x_1,\ldots,x_n) \in \R^n$ and $b \coloneqq (b_1, \ldots, b_n) \in \{-1, 0, 1\}^n$ with $n \geq 2$ the number of particles. 
The point $x_i$ is the location of the $i$-th particle, and $b_i$ is its charge {(or Burgers vector, in the setting of dislocations)}.

To any state $(x, b)$ we assign the interaction energy $E_n\colon \R^n\times\{-1,0,1\}^n\to\R\cup\{+\infty\}$ by
\begin{equation}\label{101}
  E_n(x; b) 
  \coloneqq \frac1{2 n^2} \sum_{i = 1}^n \bigg( \sum_{\substack{ j = 1 \\ j \neq i \\ b_i b_j = 1 }}^n V (x_i - x_j) 
     + \sum_{\substack{ j = 1 \\ b_i b_j = -1 }}^n W(x_i - x_j) \bigg),
\end{equation}
where $V$ and $W$ are the interaction potentials between particles of equal and opposite charge, respectively. For $V$ and $W$, we have three choices in mind, all of which are of separate interest:
\begin{enumerate}[(i)]
  \item $V(r) = -\log |r|$ and $W\equiv 0$. This corresponds to the easiest case in which the two species only interact with their own kind. It is distinct from the single-particle case solely by the annihilation rule which we specify below.
  \item $V(r) = -\log |r|$ and $W$ a regularisation of $-V$ (as illustrated in Figure~\ref{fig:VW}). This is a first step to considering the case of positive and negative charges (or positive and negative dislocations). After stating our main result for regular $W$, we comment in Section~\ref{s:in:conc} on possible extensions to singular $W$, in particular $W = -V$.
  \item $V(r) = r \coth r - \log |2 \sinh r|$ and $W$ a regularisation of $-V$. This setting corresponds to that of dislocation walls, whose discrete-to-continuum limit is established in 
  \cite{HirthLothe82,
  Hall11,
  GeersPeerlingsPeletierScardia13,
  VanMeursMunteanPeletier14,
  VanMeursMuntean14,
  vanMeurs18} for either single-sign scenarios or without annihilation. The potential $V$ has several pleasant properties: it has a logarithmic singularity at $0$, it is decreasing on $(0,\infty)$, and it is positive with integrable tails.
\end{enumerate}

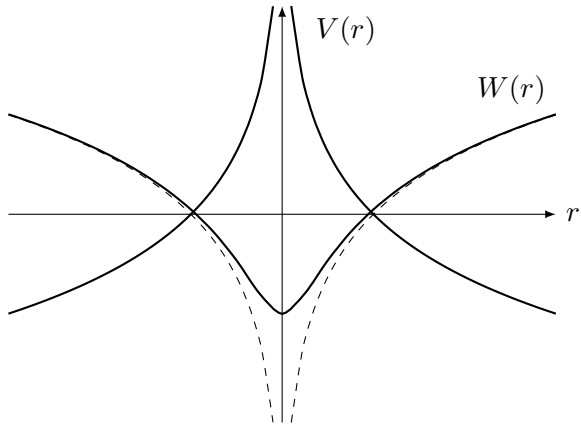
\begin{figure}[h]
\centering
\begin{tikzpicture}[scale=1.2, >= latex]    
\def \w {3}
\def \y {2.3} 
\def \d {1/3}

\draw[->] (-\w,0) -- (\w,0) node[right] {$r$};
\draw[->] (0,-\y) -- (0,\y);
\draw[domain=0.1:\w, smooth, thick] plot (\x,{ -ln(\x) });
\draw[domain=-\w:-.1, smooth, thick] plot (\x,{ -ln(-\x) });
\draw[domain=0.1:\w, smooth, dashed] plot (\x,{ ln(\x) });
\draw[domain=-\w:-.1, smooth, dashed] plot (\x,{ ln(-\x) });
\draw[domain=-\w:\w, smooth, thick] plot (\x,{ ln(\x*\x + \d*\d)/2 });
%\draw[domain=-0.1:.1, smooth, red] plot (\x,{ ln(\x*\x + \d*\d)/2 });
\draw (.7,\y) node[below] {$V(r)$};
\draw (\w,1.1) node[anchor = south east] {$W(r)$};
\end{tikzpicture} \\
\caption{Plots of $V(r) = -\log|r|$ and a typical regularisation $W$ of $-V$.}
\label{fig:VW}
\end{figure}

We propose a unified setting which includes the three cases above: 
%To combine these three cases into one unified setting, 
we consider a class of potentials $V$ and $W$ which satisfy a certain set of assumptions specified in %. The precise list of a assumptions is specified by 
Assumption~\ref{ass:VW}. The crucial assumptions are that the singularity of $V$ at $0$ is at most logarithmic, that $V(r) \to +\infty$ as $r \to 0$, that $W$ is regular, and that $V$ and $W$ have at most logarithmic growth at infinity. In view of other typical assumptions in the literature, we \emph{do not} rely on convexity or monotonicity. In Section~\ref{s:in:disc} we elaborate on the necessity of these assumptions to our main discrete-to-continuum result. 

Finally, we make three observations on the structure of \eqref{101}. 
First, if the $i$-th particle has 0 charge (\emph{i.e.}, $b_i = 0$), then it does not contribute to $E_n$. Second, the factor $1/2$ in front of the energy is common; it corrects the fact that all interactions are counted twice in the summation. Third, the condition $j \neq i$ prevents self-interaction.
\smallskip

Equation \eqref{Pn:intro} formally describes the dynamics; for a rigorous definition see Problem~\ref{problemPn} and Definition~\ref{defSolPn}. 
   \begin{equation} \label{Pn:intro} %\tag{$P_n$}
   \left\{ \begin{aligned}
     &\frac \de{\de t} x_i = - \frac1{n} \sum_{ j \,:\, b_i b_j = 1 } V' (x_i - x_j) - \frac1{n} \sum_{ j\,:\, b_i b_j = -1 } W' (x_i - x_j)
     &&\text{on $(0, T) \setminus T_{\col}$,} \\
     &\text{annihilation rule at $T_{\col}$.}
     &&
   \end{aligned} \right.
\end{equation}
Here, $T_{\col} = \{t_1, \ldots, t_K\}$ is a finite set, outside of which $x(t)$ is the gradient flow of $E_n$. The collision times $t_k$ correspond to the times at which at least one pair $(i,j)$ of two particles collide, \emph{i.e.} $x_i(t_k) = x_j (t_k)$. We will show that the singularity of $V$ keeps particle of the same type separated, which implies that the only possible particle collisions are that of two particles with opposite charge. The annihilation rule dictates that at a collision between particles $x_i$ and $x_j$, the charges $b_i(t)$ and $b_j(t)$ are put to $0$ for all $t \geq t_k$. After $t_k$, the system of ODEs is restarted with initial condition $(x(t_k); b(t_k))$. While particles are not removed from the dynamics, we note that, if particle $i$ has zero charge, then
\begin{itemize}
  \item $x_i (t)$ remains stationary,
  \item the velocity of all other particles does not depend on $x_i(t)$, and
  \item particle $i$ cannot annihilate any more with any other particle.
\end{itemize}
Hence, the mathematical framework of \eqref{Pn:intro} encodes 
annihilations without removing particles from the equations.
%removing particles from the system without removing them from the equations.

\subsection{The continuum problem (PDE for the particle density)}
\label{s:in:P}

On the continuum level, the state of the system is described by the nonnegative measures $\rho^\pm$, which represent the density of the positive/negative particles (including those that are annihilated). We further set 
\begin{equation*}
  \rho := \rho^+ + \rho^-
  \quad \text{and} \quad
  \kappa := \rho^+ - \rho^-,
\end{equation*}
and require the total mass of $\rho$ to be $1$. We note that $\rho^+$ and $\rho^-$ need not be mutually singular, and thus $\rho^\pm \geq [\kappa]_\pm$, where $[\kappa]_\pm$ denotes the positive/negative part of the signed measure $\kappa$. We interpret $[\kappa]_\pm$ as the density of positive/negative particles that have not been annihilated yet. 

For $\rho^\pm(t)$ we consider the following set of evolution equations
\begin{equation} \label{P}%\tag{$P$}
  \left\{ \begin{aligned}
    & \partial_t \rho^+ = \div \big( [\kappa]_+ \, (V' * [\kappa]_+ + W' * [\kappa]_- ) \big)
    && \text{in } \mathcal D' ( (0,T) \times \R ), \\
    & \partial_t \rho^- = \div \big( [\kappa]_- \, (V' * [\kappa]_- + W' * [\kappa]_+ ) \big)
    && \text{in } \mathcal D' ( (0,T) \times \R ),
  \end{aligned} \right.
\end{equation}
We remark that no annihilation rule is specified; the annihilation is encoded in taking the positive/negative part of $\kappa$. Indeed, it is easy to imagine that while $\rho = \rho^+ + \rho^-$ is conserved in time, $[\kappa]_+ + [\kappa]_- = |\rho^+ - \rho^-|$ may not be.

\subsection{Main result: discrete-to-continuum limit}
\label{s:in:t}

Our main theorem (Theorem~\ref{t}) states that the solutions to \eqref{Pn:intro} converge to a solution of \eqref{P} as $n \to \infty$.
%Theorem~\ref{t} 
It specifies the concept of solution to both problems, the required conditions on the sequence of initial data of \eqref{Pn:intro}, and guarantees that the so-constructed solution to \eqref{P} at time $0$ corresponds to the limit of the initial conditions as $n \to \infty$. The convergence is uniform in time on $[0,T]$ for any $T > 0$. The convergence in space is with respect to the weak convergence.
%set in the framework of measures (
%Wasserstein distance of measures.
As a by-product of Theorem~\ref{t}, we obtain global-in-time existence of a solution $(\rho^+, \rho^-)$ to \eqref{P} for which the masses of $\rho^\pm$ are conserved in time.
\smallskip

In order to give effectively an outline of the proof and the motivation for the main assumptions under which Theorem \ref{t} holds (Section \ref{s:in:disc}), we first describe the related literature.

\subsection{Related literature}
\label{s:in:lit}

We start by relating \eqref{P} formally to its singular counterpart. Replacing $W$ by $-V$, we obtain from a formal calculation that the difference of the two equations in \eqref{P} is given by
\begin{equation} \label{P:sing}
  \partial_t \kappa
  = \div \big( |\kappa| ( V'*\kappa ) \big).
\end{equation}
For $V(r) = -\log|r|$, equation \eqref{P:sing} was introduced by \cite{Head72III} and later proven in \cite{BilerKarchMonneau10} to attain unique solutions when posed on $\R$ with proper initial data.
\smallskip

In the remainder of this subsection, we put our main result Theorem~\ref{t} in the perspective of the literature. 
We start by describing those specifications of \cite{ForcadelImbertMonneau09,MonneauPatrizi12,MonneauPatrizi12a} which are closest to our main result. A specification of \cite[Theorems~2.1--2.3]{ForcadelImbertMonneau09} proves a `discrete'-to-continuum result from \eqref{Pn:intro} to \eqref{P:sing}, in the case where $V(r) = - W(r)$ is a regularisation of $-\log|r|$ on the length-scale $1/n$. 
We put `discrete' in apostrophes, because their equivalent of \eqref{Pn:intro}, given by \cite[equation (5)]{ForcadelImbertMonneau09}, is a Hamilton-Jacobi equation, which %is only proven to 
includes the solution to \eqref{Pn:intro} only if all particles have the same sign. %but there seems no reason to believe that
It is not clear if this Hamilton-Jacobi equation relates to \eqref{Pn:intro}
%does the right thing when allows for 
if the particles have opposite sign. % are considered. 
%Nonetheless, for particles of single sign, their discrete-to-continuum result %is rigorous, and 
%also holds when $V(r) = -\log|r|$: this is stated in \cite[Theorem~2.5]{ForcadelImbertMonneau09}, but this result on itself was already known since \cite{Schochet96}.

As opposed to \cite{ForcadelImbertMonneau09}, \cite{MonneauPatrizi12} starts from a different Hamilton-Jacobi equation, which corresponds to the Peierls-Nabarro model \cite{Nabarro47,Peierls40}. This model is a phase-field model for the dynamics of dislocations which naturally includes annihilation. In this model, opposite to encoding dislocations as points on the line, the dislocations are identified by the pulses of the derivative of a multi-layer phase field on the real line. In \cite{MonneauPatrizi12}, the width of these pulses is taken to be on the same length-scale as the typical distance between neighbouring dislocations. Then, in the joint limit when the regularisation length-scale (and thus simultaneously $1/n$) tend to $0$, an \emph{implicit} Hamilton-Jacobi equation is recovered \cite{MonneauPatrizi12}. In \cite[Theorem~1.2]{MonneauPatrizi12a} it is shown that this implicit Hamilton-Jacobi equation converges to \eqref{P:sing} in the dilute dislocation density limit. While this framework seems promising for a direct `discrete'-to-continuum result (`discrete' being the Peierls-Nabarro model) to \eqref{P}, it only applies to co-dimension $1$ objects, \emph{i.e.}, particles in $1$D and curves in $2$D.
\smallskip

Next we discuss the literature related to problem \eqref{Pn:intro}.
\cite{Serfaty07II} and \cite[Theorems~1.3 and 1.4]{SmetsBethuelOrlandi07} describe the version of \eqref{Pn:intro} in which $W(r)$ is replaced by $-V(r) = \log |r|$. In this setting, a solution is constructed as the limit of the Ginzburg-Landau equation on the dynamics of vortices when the phase-field parameter $\varepsilon$ tends to $0$, and detailed properties of this solution are established. However, we did not find a precise solution concept to this version (or any other version) of \eqref{Pn:intro}, which in particular yields a unique solution. We establish such a solution concept to \eqref{Pn:intro} in Definition \ref{defSolPn} and Proposition~\ref{prop:Pn}.
%Yet, %as it was not in the interest of the authors and the application, 
%the limiting dynamics of the vortices on themselves is not stated as a independent, closed set of equations. 
%It may be possible to set in \eqref{Pn:intro} $W$ as $-V_\delta$, with $V_\delta$ a regularisation of the logarithmic singularity in $V(r) = -\log|r|$ at $0$ on a length-scale $\delta$, prove a convergence result in the limit $\delta \to 0$, and show that the limiting curves satisfy similar properties as specified in \cite[Thm.~??]{SmetsBethuelOrlandi07}. This is beyond our scope.
\smallskip

Regarding the continuum problem \eqref{P}, we have not found this set of equations in the literature. Nonetheless, we believe the case $W=0$ to be of independent interest, since then \eqref{P} serves as the easiest benchmark problem for future studies on annihilating particles. 
%\PATRICK{[Comment to your suggestion: Choosing 'colliding' here seems to general, as that would encompass also all kinds of rules other than annihilation (like stickyness, changing mass/charge in a weird way)]}. 
Also, since our discrete-to-continuum result holds for taking $W$ as a regularisation of $-V$, we expect that \eqref{P:sing} can be obtained from \eqref{P} as the regularisation length-scale tends to $0$ (see Section~\ref{s:in:conc}). Therefore, we review the literature on \eqref{P:sing}.

Equation \eqref{P:sing} as posed on $\R$ with $V(r) = -\log|r|$, or even $V(r) = |r|^{-a}$ with $0 < a < 1$, attains a self-similar solution \cite[Theorem~2.4]{BilerKarchMonneau10} in which $\kappa$ has a sign. The self-similar solution is expanding in time (due to the repelling interaction force $V'(r)$), and describes the long-time behaviour of the unique viscosity solutions to \eqref{P:sing} \cite[Theorem~2.5]{BilerKarchMonneau10} for appropriate initial data. Moreover, for $V(r) = -\log|r|$ and initial condition $\kappa_\circ \in L^1(\R)$, the viscosity solution $\kappa$ to \eqref{P:sing} satisfies $\kappa(t) \in L^p(\R)$ for all $1 \leq p \leq \infty$ \cite[Theorem~2.7]{BilerKarchMonneau10}. In conclusion, despite \eqref{P:sing} being the singular counterpart of \eqref{P}, it has a well-defined global-in-time solution concept.

Lastly, we compare our result to that of \cite{AmbrosioMaininiSerfaty11}. There, the authors are interested in deriving a gradient flow structure of \eqref{P:sing} on $\R^2$ %\footnote{On $\R^2$, $V'$ is replaced by $\nabla V$} 
with $V$ having a logarithmic singularity at $0$ by defining a discrete in time minimising movement scheme and passing to the limit as the time step size tends to 0. The related convergence result is \cite[Theorem~1.4]{AmbrosioMaininiSerfaty11}. However, the limit equation is not fully characterised as \eqref{P:sing}, since in that equation $|\kappa|$ is replaced by an unknown measure $\mu \geq |\kappa|$ which is obtained from compactness. 
The connection to our main result is that we faced a similar problem. Due to our 1D setup and by a technical assumption on the initial data, we were able to characterise the corresponding $\mu$ as $|\kappa|$.

\subsection{Discussion on the proof, assumptions, and possible extensions}
\label{s:in:disc}

We divide this section into several topics regarding the proof, assumptions, and possible extensions of Theorem \ref{t} (outlined in Section \ref{s:in:t}). 
\smallskip

\emph{Summary of the proof}. A crucial step is the observation that the solution to \eqref{Pn:intro}, seen as a pair of empirical measures $\mu_n^\pm$, is a solution to \eqref{P}, \emph{i.e.},
\begin{equation} \label{P:mun}
  \left\{ \begin{aligned}
    & \partial_t \mu_n^+ = \div \big( [\kappa_n]_+ \, (V' * [\kappa_n]_+ + W' * [\kappa_n]_- ) \big)
    && \text{in } \mathcal D' ( (0,T) \times \R ), \\
    & \partial_t \mu_n^- = \div \big( [\kappa_n]_- \, (V' * [\kappa_n]_- + W' * [\kappa_n]_+ ) \big)
    && \text{in } \mathcal D' ( (0,T) \times \R ),
  \end{aligned} \right.
\end{equation}
where $\kappa_n := \mu_n^+ - \mu_n^-$. The annihilation is completely covered by taking the positive and negative part of $\kappa_n$. This property is the reason for encoding annihilation in the charges $b_i(t)$ rather than removing particles from the dynamics. Then, relying on the gradient flow structure underlying \eqref{Pn:intro} and the boundedness of $W$, we find, by the usual compactness arguments \emph{\`a la} Arzel\`a-Ascoli, limiting curves $\rho^\pm(t)$. It then remains to pass to the limit $n \to \infty$ in \eqref{P:mun}.
The difficulty is in characterising the limit of $[\kappa_n]_\pm$, which only accounts for the particles that have not collided yet. Indeed, the convergence of measures is not invariant with respect to taking the positive and negative part. It is here that we heavily rely on the one-dimensional setting and on a technical assumption on the initial data (Assumption~\ref{ass:IC}), which provides an $n$-independent bound on the number of neighbouring pairs of particles with opposite sign. This bound allows us to characterise the limit of $[\kappa_n]_\pm$ as $[\kappa]_\pm$. 
\smallskip

\emph{Motivation for Assumption~\ref{ass:IC}}. Assumption~\ref{ass:IC} prevents small-scale oscillations between $\pm 1$ phases. A similar assumption is made in \cite{MonneauPatrizi12}, where the initial data for the particles is constructed from the continuum initial datum. While one might expect that small-scale oscillations cancel out on small time scales, the simulations in \cite[Chapter~9]{PatricksThesis} suggest otherwise. The problem with such small-scale oscillations is that they cause the limit of $[\kappa_n]_\pm$ to be larger than $[\kappa]_\pm$, which makes it difficult to characterise the limit as $n \to \infty$ of \eqref{P:mun} as \eqref{P}.
\smallskip

\emph{Singularity of $V$}. Assuming the singularity of $V$ to be at most logarithmic is needed to apply the discrete-to-continuum limit passage technique in \cite{Schochet96}. 

In fact, we also \emph{require} that $V(r) \to \infty$ as $r \to 0$, \emph{i.e.}, we do not allow for a regular $V$. While regular $V$ and $W$ (in particular $W = -V$) would simplify the equations and many steps in the proof of our main theorem, it may result in two technical difficulties: collision between three or more particles, and the limiting signed measure $\kappa$ having atoms. These difficulties complicate the convergence proof of $[\kappa_n]_\pm$ to $[\kappa]_\pm$ as $n \to \infty$. Since all our intended applications correspond to singular potentials $V$, we choose to side-step these technical difficulties by simply requiring $V$ to have a singularity at $0$.
\smallskip

\emph{Regularity of $W$}. $W$ being bounded around $0$ results in a lower bound on the energy along the evolution, which we need for equicontinuity and thus for compactness of $\mu_n^\pm$. Also, while passing to the limit $n \to \infty$ in \eqref{P:mun}, we need $W'$ regular enough (the technique in \cite{Schochet96} does not apply for logarithmic $W$).
\smallskip

\emph{Logarithmic tails of $V, W$}. While it would be easier to assume that $V$ is bounded from below and $W$ is globally bounded, we \emph{also} allow for logarithmic tails to include all three scenarios in Section~\ref{s:in:Pn}. The logarithmic tails of $V$ and $W$ result in the energy $E_n$ to be unbounded from below. However, following the idea in \cite{Schochet96} to prove a priori bounds on the \emph{moments} of $\mu_n^\pm (t)$, we easily obtain that $E ( \mu_n^\pm (t) )$ is bounded from below by $-C(1+t)$ for some $C > 0$ independent of $n$ and $t$.

\subsection{Conclusion and outlook}
\label{s:in:conc}

We intend our main result to open a new thread of research on including annihilation in discrete-to-continuum limits. Here we discuss several open ends.
\smallskip

\emph{$W = -V$ singular}. This setting corresponds to charges (or dislocations) on the real line. On the continuum level, see \eqref{P:sing}, this equation is well-understood \cite{BilerKarchMonneau10}, but on the discrete level we have not found a closed set of equations to describe the discrete counterpart of \eqref{Pn:intro} (other than \cite{Serfaty07II}, \cite{SmetsBethuelOrlandi07}, whose results are discussed in Section~\ref{s:in:lit}). Since our main result does allow for $-W$ to be a regularisation $V_\delta$ of $V$ ($\delta$ denotes the arbitrarily small, but fixed, length-scale of the regularisation), this calls for three interesting limit passages:
\begin{enumerate}[(a)]
  \item \emph{$\delta \to 0$ with $n$ fixed}. This limit seems the easiest out of the three. Similar to \cite{Serfaty07II}, \cite{SmetsBethuelOrlandi07}, the idea is to pass to the limit, and \emph{describe} the limit rather than posing a closed set of equations for it. One challenge is that in the limiting curves prior to collision at $t_*$, the particles' speed blows up as $\sim 1/\sqrt{t_* - t}$ (this is easily seen by considering only 2 particles; one positive and one negative). While the resulting curves are not Lipschitz in time, they are $\mathrm{C}^{1/2}$ in time. However, such collisions correspond to $-\infty$ wells in the energy, which require the development of a proper renormalisation of $E_n$.

Another challenge is that particles need not collide if they come close, regardless how small $\delta > 0$ is. To see this, consider two particles with opposite sign and with mutual distance smaller than $\delta$. Since $V_\delta$ is regular, the particles will come exponentially close, but they will not collide in finite time. In the case of many particles, such a close pair will only collide if the external force (induced by the other particles) acts in the right direction. If it does not collide, then the pair remains in the system (as opposed to the case of singular $W$), and may even interact with or annihilate other particles that come close.
  
  \item \label{bullet:b} \emph{Connecting \eqref{P} to \eqref{P:sing} by $\delta \to 0$}. Taking $W = -V_\delta$ and setting $\rho_\delta^\pm$ as a corresponding solution to \eqref{P}, it is impossible to pass directly to the limit in \eqref{P} due to the term $[\kappa_\delta]_\pm (V_\delta' * [\kappa_\delta]_\mp)$. Instead, the structure of \eqref{P:sing} in terms of viscosity solutions (see \cite{BilerKarchMonneau10}) seems promising. We leave it to future research to find out whether \eqref{P} enjoys a similar structure, and if not, whether there is a different continuum model for annihilating particles that does.
  
  \item \emph{Connecting \eqref{Pn:intro} to \eqref{P:sing} by a joint limit $n \to \infty$ and $\delta_n \to 0$}. This approach fits to the convergence result obtained in \cite{MonneauPatrizi12}, where roughly speaking $\delta_n \sim 1/n$ is considered, but where a different equation than \eqref{P:sing} is obtained in the limit. It would be interesting to see whether those results can be extended to the case $\delta_n \ll 1/n$, in which case the expected limit is \eqref{P:sing} (see \cite{MonneauPatrizi12a}).
\end{enumerate}
\smallskip

\emph{Different regularisations of collisions}. In the spirit of proving any of the above limit passages, we discuss alternative regularisations other than taking $W$ regular. One idea is `premature annihilation', where particles are removed from the system when they come $\delta$-close, with $\delta > 0$ a regularisation parameter. This approach is commonly adapted in numerical simulations of discrete systems with an annihilation rule. However, it is not obvious what the limiting equation as $n \to \infty$ (counterpart of \eqref{P:sing}) is for $\delta > 0$ fixed, because we expect the supports of $[\kappa]_+$ and $[\kappa]_-$ to be separated by at least $\delta$. A third option is to mollify the jump of the charge $b_i(t)$ from $\pm1$ to $0$, possibly by an additional ODE for $b_i(t)$. We have not found a proper rule for this that would still allow for a discrete-to-continuum convergence result.
\smallskip

\emph{Higher dimensions}. In this paragraph we consider the extension to two dimensions; the discussion easily extends to higher dimensions. The one ingredient in our proof which intrinsically relies on our 1D setting, is the \emph{separation} condition on the initial data. This condition limits the collisions to happen only at a finite number of points. In 2D, collisions are bound to happen along curves (or more complicated subsets of $\R^2$), which makes it challenging to characterise the limit of $[\kappa_n]_\pm$. A similar problem occurred in \cite{AmbrosioMaininiSerfaty11} as discussed in Section~\ref{s:in:lit}. In future research we plan to relax our `separation' assumption, possibly by considering a different regularisation of collisions. 

The remainder of the paper is organised as follows. In Section~\ref{s:not} we fix our notation and list the assumptions on $V$, $W$ and the initial data. In Section~\ref{sect:preliminary} we recall known results and provide the preliminaries. In Section~\ref{s:Pn} we give a rigorous definition of \eqref{Pn:intro}, show that it attains a unique solution, and establish several properties of it. In Section~\ref{s:t} we state and prove our main result, Theorem~\ref{t}.

\section{Notation and standing assumptions}
\label{s:not}

%\PATRICK{To be removed later:
%Notation for indices:
%\begin{itemize}
%  \item[$i,j$] paritcles $x_i, x_j$, from $1$ to $n$
%  \item[$k$] collision times $t_k$, from $1$ to $K$
%  \item[$\ell$] separation points $a_\ell$ in Assumption \ref{ass:IC}, from $1$ to $L$
%  \item[$m$] any additional indices we might need for other, local-in-proof stuff
%\end{itemize}
%Notation for function and measure spaces: function spaces go in mathroman $\rC$, $\AC$, $\rL$. Measure and probability  spaces go in mathcal $\mathcal P$, $\mathcal M$, and so on.
%}

Here we list the symbols and notation which we use in the remainder of this paper: 
\newcommand{\specialcell}[2][c]{\begin{tabular}[#1]{@{}l@{}}#2\end{tabular}}
\begin{longtable}{lll}
%$a \wedge b$, $a \vee b$ & $\min \{a, b\}$, $\max \{a, b\}$ & \\
$\mathcal B (\R)$ & space of Borel sets on $\R$ & Section~\ref{sect:preliminary}\\
%$\|f\|_q$ & $L^q$-norm of $f$ on the domain of $f$ & \\
$f(a-)$ & $\lim_{y \uparrow a} f(y)$ &\\
$[f]_\pm$ & positive or negative part of $f$ & \\ 
$\mu \otimes \nu$ & product measure; $(\mu \otimes \nu) (A \times B) = \mu(A) \nu (B)$ & Section~\ref{sect:preliminary} \\
%$\indicatornoacc A$ & $\indicatornoacc A (x)$ equals $1$ if $x \in A$ and $0$ if $x \notin A$ & \\
$C>0$ & constant whose value can possibly change from line to line & \\
$\bmu$ & $\bmu \coloneqq (\mu^+, \mu^-) \in \mathcal P (\R \times \{\pm1\})$ & \eqref{bmu:def} \\
$\mathcal M (\R)$ & space of finite, signed Borel measures on $\R$ & Section~\ref{sect:preliminary} \\
$\mathcal M_+ (\R)$ & space of the non-negative measures in $\mathcal M (\R)$ & Section~\ref{sect:preliminary} \\
%$M ([0,1])$ & Domain of $E$; $M ([0,1]) \subset \mathcal M ([0,1]; [0,\infty)^2)$ & \eqref{fd:M} \\
$\N$ & $\{1,2,3,\ldots\}$ &  \\
$\mathcal P (\R)$ & \specialcell[t]{space of probability measures; \\$\mathcal P (\R) = \{ \mu \in \mathcal M_+ (\R) : \mu (\R) = 1 \}$} & Section~\ref{sect:preliminary} \\
$\mathcal P_2 (\R)$ & \specialcell[t]{probability measures with finite second moment; \\$\mathcal P_2 (\R) = \{ \mu \in \mathcal P_2 (\R) : \int_{-\infty}^\infty x^2 \, \de \mu(x) < \infty \}$} & Section~\ref{sect:preliminary} \\
$V$ & interaction potential for equally signed particles & Assumption~\ref{ass:VW} \\
$W$ & interaction potential for oppositely signed particles & Assumption~\ref{ass:VW} \\
$W(\mu, \nu)$ & $2$-Wasserstein distance between $\mu, \nu \in \mathcal P (\R)$ & \cite{AmbrosioGigliSavare08} \\
$\bW(\bmu, \bnu)$ & 2-Wasserstein distance between $\bmu, \bnu \in \mathcal{P}_2(\R)$ & \eqref{bW2}  
\end{longtable} 

Assumption~\ref{ass:VW} lists the standing properties which we impose on $V$ and $W$.

\begin{ass} \label{ass:VW}
We require that the interaction potentials $V\colon\R\setminus\{0\}\to\R$ and $W\colon\R\to\R$ satisfy the following conditions:
\begin{subequations}\label{ass:VW:eq}
\begin{eqnarray}
%\begin{enumerate}[(i)]
 % \item 
 && \text{$V \in \rC^1 (\R \setminus \{0\})$, $W \in \rC^1 (\R)$, $V' \in \Lip_{\loc} (\R \setminus \{0\})$, and $W' \in \Lip (\R)$,} \label{ass:VW:regy} \\
%  \item 
&& \text{$V$ and $W$ are even;} \label{ass:VW:even}  \\
%  \item 
%&& \text{$-W \leq V$;} \label{ass:VW:bds} \\
%  \item 
&& \text{$V(r) \to +\infty$ as $r \to 0$;} \label{ass:VW:sing} \\
%  \item 
&& \text{$r \mapsto r V' (r)$ and $r \mapsto r W'(r)$ are in $L^\infty (\R)$.} \label{ass:VW:pbds} 
%\end{enumerate}
\end{eqnarray}
\end{subequations}
\end{ass}
%\PATRICK{[ed.: the condition on the bounds of $V$ and $W$ has been completely removed. It turns out sufficient to have the bounds listed below.]}

For convenience, we set $V'(0) \coloneqq 0$. Below we list two remarks on Assumption~\ref{ass:VW}:
\begin{itemize}
  \item we assume no monotonicity on $V$ or $W$;
  \item Condition \eqref{ass:VW:pbds} implies that $V$ has at most a logarithmic singularity, and that $V$ and $W$ have at most logarithmically diverging tails, namely
  \begin{equation}\label{402}
  |V(r)| + |W(r)| \leq C\big( \big| \log |r| \big| + 1 \big),
  \quad \text{for all } r \neq 0.
  \end{equation}
  Due to condition \eqref{ass:VW:sing}, we can sharpen this inequality around $0$ by
  \begin{equation}\label{VWgrowth}
(V+W)(r)\geq -C r^2,
\quad \text{for all } r \neq 0;
\end{equation}
\end{itemize}

The following assumption on the initial data states that no pair of particles of opposite sign should start at the same position. 
\begin{ass}[Separation assumption on the initial data $(x^\circ; b^\circ)$] \label{ass:IC}
There exist $-\infty < a_0 \leq a_1 \leq \ldots \leq a_{2L} < +\infty$ such that
\begin{equation*}
  \{ x_i^\circ : b_i^\circ = 1 \} 
  \subset \bigcup_{\ell = 1}^L (a_{2 \ell - 2}, a_{2 \ell - 1}), \qquad
  \{ x_i^\circ : b_i^\circ = -1 \} 
  \subset \bigcup_{\ell = 1}^L (a_{2 \ell - 1}, a_{2 \ell}).
\end{equation*}
\end{ass}
%\PATRICK{[ed.: I added 'separation' in the title, because this is not the complete set of assumptions; we also need finite energy and bounded moments]}
The importance of this assumption is clarified later when the limit %passage 
$n \to \infty$ is considered, in which the %related 
number $L$ is assumed to be $n$-independent (see also Section \ref{s:in:disc}). Moreover, we will show in Proposition~\ref{prop:Pn} that this assumption is conserved in time.
%\MARCO{I moved these lines here so that $L$ is defined in the statement of assumption.}

\section{Preliminary results}\label{sect:preliminary}
We collect here some basic definitions and known results that will be useful in the sequel.

\subsection{Probability spaces and the Wasserstein distance}

On $\mathcal P_2 (\R)$ (space of probability measures with finite second moment; see Section~\ref{s:not}), the (square of the) 2-Wasserstein distance between $\mu, \nu \in \mathcal P_2 (\R)$ is defined as 
\begin{align} \label{W2}
W^2 ( \mu, \nu ) \coloneqq \inf_{\gamma \in \Gamma(\mu,\nu)} \iint_{\R^2} |x-y|^2 \, \de \gamma(x,y),
\end{align}
where $\Gamma(\mu,\nu)$ is the set of couplings of $\mu$ and $\nu$, namely,
\[
\Gamma(\mu,\nu) \coloneqq \big\{\gamma\in \mathcal P (\R^2): \gamma(A\times \R) = \mu(A), 
 \; \gamma(\R \times A) = \nu(A)\text{ for all } A \in \mathcal B (\R) \big\}.
\]
We refer to \cite{AmbrosioSerfaty08} for the basic properties of $W$. As usual, we set $\Gamma_\circ (\mu,\nu) \subset \Gamma(\mu,\nu)$ as the set of transport plans $\gamma$ which minimise \eqref{W2}.

%1 P W Pm
Since we are working with positive and negative particles, we follow \cite{GarroniVanMeursPeletierScardia18prep} by defining a space of probability measures on $\R \times \{\pm1\}$, where $\R \times \{\pm1\}$ is endowed with the distance 
\begin{equation*}
  \sfd^2 (\bar x, \bar y) \coloneqq |x - y |^2 + |p-q|,
  \qquad \bar x = (x, p) \in \R \times \{\pm1\}, \ \bar y = (y, q) \in \R \times \{\pm1\}.
\end{equation*}
We denote this probability space by $\mathcal{P}(\R \times \{\pm1\})$, and its elements by $\bmu$ or $(\mu^+, \mu^-)$, with the understanding that %which are related by
\begin{equation} \label{bmu:def}
  \bmu (A^+, A^-) = \mu^+(A^+) + \mu^-(A^-),  \qquad \text{for all $A^+, A^- \in \mathcal B(\R)$}.
\end{equation}
On
\begin{equation*}
  \mathcal P_2 (\R \times \{\pm1\}) \coloneqq \left\{ \bmu \in \mathcal P (\R \times \{\pm1\}) : \int_\R |x|^2 \, \de \mu^\pm (x) < +\infty \right\}
\end{equation*}
we define the (square of the) 2-Wasserstein distance between $\bmu$ and $\bnu$ as 
\begin{align} \label{bW2}
\bW^2 \big( \bmu, \bnu \big) \coloneqq \inf_{\bgamma \in \bGamma(\bmu,\bnu)} \iint_{( \R\times\{\pm1\} )^2} \sfd^2(\bar x, \bar y)\, \de \bgamma(\bar x, \bar y),
\end{align}
where $\bGamma(\bmu,\bnu)$ is the set of couplings of $\bmu$ and $\bnu$, namely,
\[
\begin{split}
\bGamma(\bmu,\bnu) \coloneqq \big\{ & \bgamma\in \mathcal P \big((\R\times\{\pm1\})^2\big): \bgamma(A\times (\R\times\{\pm1\})) = \bmu(A), \\
  &  \bgamma((\R\times\{\pm1\})\times A) = \bnu(A)\text{ for all } A \in \mathcal B (\R \times\{\pm1\}) \big\}.
\end{split}
\]

Since it turns out that \eqref{P} has a mass-preserving solution $\brho(t) := (\rho^+(t), \rho^-(t)) \in \mathcal P_2 (\R \times \{\pm1\})$, for which also the mass of $\rho^+(t)$ and $\rho^-(t)$ is conserved in time, we define the corresponding subspace
\begin{equation*}
  \mathcal P_2^m (\R \times \{\pm1\}) \coloneqq \{ \bmu \in \mathcal P_2 (\R \times \{\pm1\}) : \mu^+(\R) = m \};
\end{equation*}
where $m \in [0,1]$ is the total mass of the positive particle density. Clearly, if $\bmu \in \mathcal P_2^m (\R \times \{\pm1\})$, then $\mu^-(\R) = 1-m$. For any $\bmu, \bnu \in \mathcal P_2^m (\R \times \{\pm1\})$ we have that
\begin{equation} \label{bW:est}
  \bW^2 ( \bmu, \bnu ) \leq W^2 ( \mu^+, \nu^+ ) + W^2 ( \mu^-, \nu^- ),
\end{equation}
which simply follows by shrinking the set of couplings $\bGamma(\bmu,\bnu)$ in \eqref{bW2}.

\subsection{Weak form of the continuum problem (\ref{P})}

We use the following notation convention. For any $\brho \in \mathcal P (\R \times \{\pm1\})$, we set
\begin{equation} \label{kap:rho:trho}
%\begin{align}
  \rho  \coloneqq \rho^+ + \rho^- \in \mathcal P (\R), \qquad
  \kappa  \coloneqq \rho^+ - \rho^- \in \mathcal M (\R), \qquad
  \tilde \rho^\pm  \coloneqq [\kappa]_\pm \in \mathcal M_+ (\R).
%\end{align}
\end{equation}
We consider the following weak form of \eqref{P}: given an initial condition $\brho^\circ \in \mathcal P_2 (\R \times \{\pm1\})$, find $\rho$ satisfying
\begin{equation} \label{wP}
\begin{split}
  0 = & \int_0^T \int_\R \partial_t \varphi^\pm (x) \, \de \rho^\pm (x) \de t \\
      & - \frac12 \int_0^T \iint_{\R \times \R} \big( (\varphi^\pm)' (x) - (\varphi^\pm)' (y) \big) \, V' (x - y) \,\de([\kappa]_\pm \otimes [\kappa]_\pm)(x,y) \de t \\
      & - \int_0^T \int_\R (\varphi^\pm)'(x) \, (W' * [\kappa]_\mp)(x) \, \de [\kappa]_\pm (x)\de t,
  \qquad \text{for all $\varphi^\pm \in \rC_c^\infty((0,T) \times \R)$,}
\end{split}
\end{equation}
where we have exploited that $V'$ is odd. 
We seek a solution of \eqref{wP} in $\AC (0, T; \mathcal P_2^m (\R \times \{\pm1\}))$ with $m = \rho^{+,\circ} (\R) \in [0,1]$.

\subsection{Several topologies and their connections}

%2 AC + mettric der
Next we define the space of absolutely continuous curves and their metric derivatives. While the following definitions work on any complete metric space, we limit our exposition to $(\mathcal P_2 (\R \times \{\pm1\}), \bW)$. For any $1 \leq p < \infty$, $\AC^p (0, T; \mathcal P_2 (\R \times \{\pm1\}))$ denotes the space of all curves $\bmu : (0, T) \to \mathcal P_2 (\R \times \{\pm1\})$ for which there exists a function $f \in \rL^p(0,T)$ such that
\begin{equation} \label{eqn:defn:AC:bd}
  \bW \big( \bmu(s), \bmu(t) ) \leq \int_s^t |f(r)|^p \,\de r,
  \qquad \text{for all } 0 < s \leq t < T.
\end{equation}
We set $\AC (0, T; \mathcal P_2 (\R \times \{\pm1\})) \coloneqq \AC^1 (0, T; \mathcal P_2 (\R \times \{\pm1\}))$. By \cite[Theorem~1.1.2]{AmbrosioGigliSavare08}, the metric derivative
\begin{equation} \label{eqn:defn:metric:slope:chap7}
  |\bmu'|_\bW (t) \coloneqq \lim_{s \to t} \frac{ \bW \big( \bmu (s), \bmu (t) \big) }{|s-t|}
\end{equation}
is defined for any $\bmu \in \AC (0, T; \mathcal P_2 (\R \times \{\pm1\}))$ and for a.e.~$t \in (0,T)$. Moreover, $|\bmu'|_\bW$ is a possible choice for $f$ in \eqref{eqn:defn:AC:bd}.

The following theorem is a simplified version of \cite[Theorem~47.1]{Munkres00} applied to the metric space $(\mathcal P_2 (\R \times \{\pm1\}), \bW)$.
\begin{lem}[Ascoli-Arzel\`a] \label{lem:AA}
$\mathcal F \subset \rC ( [0, T]; \mathcal P_2 (\R \times \{\pm1\}) ) $ is pre-compact if and only if
\begin{enumerate}[(i)]
 \item \label{precompact} $\{ \bmu(t) : \bmu \in \mathcal F \}$ is pre-compact in $\mathcal P_2 (\R \times \{\pm1\})$ for all $t \in [0,T]$,
 \item \label{equicontinuous} $\displaystyle \forall \, \varepsilon > 0 \: \exists \, \delta > 0 \: \forall \, \bmu \in \mathcal F \: \forall \, t,s \in [0, T] : |t - s| < \delta \: \Longrightarrow \: \bW \big( \bmu(t), \bmu(s) \big) < \varepsilon$.
\end{enumerate} 
\end{lem}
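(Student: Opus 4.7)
The plan is to reduce the statement to the classical Ascoli--Arzelà theorem (Munkres~47.1), which applies to continuous functions from a compact topological space into an arbitrary metric space. First I would verify the setup: $[0,T]$ is compact, and $(\mathcal{P}_2(\R \times \{\pm1\}), \bW)$ is a metric space. Then both the uniform (sup) metric $\sup_{t \in [0,T]} \bW(\bmu(t), \bnu(t))$ on $\rC([0,T]; \mathcal{P}_2(\R \times \{\pm1\}))$ and the notions of equicontinuity and pointwise pre-compactness used in Munkres are the ones appearing in conditions (i)--(ii). Hence the statement is essentially a rephrasing of Munkres~47.1 in our notation, and the proof reduces to citing it once the framework is in place.

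For the ``only if'' direction, I would argue by total boundedness. Given $\varepsilon > 0$, cover $\mathcal F$ by finitely many balls of radius $\varepsilon/3$ centered at $\bmu_1, \ldots, \bmu_K \in \rC([0,T]; \mathcal{P}_2(\R \times \{\pm1\}))$. Condition (i) then follows by evaluating at any fixed $t$, since $\{\bmu_k(t)\}_{k=1}^K$ provides a finite $\varepsilon/3$-net for $\{\bmu(t) : \bmu \in \mathcal F\}$. For equicontinuity, each $\bmu_k$ is uniformly continuous on the compact interval $[0,T]$, so there exists $\delta_k > 0$ such that $|t-s| < \delta_k$ implies $\bW(\bmu_k(t), \bmu_k(s)) < \varepsilon/3$; taking $\delta := \min_k \delta_k$ and using a triangle inequality with the nearest $\bmu_k$ yields (ii).

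For the ``if'' direction, I would use a standard diagonal extraction. Pick a countable dense set $\{t_j\}_{j \in \N} \subset [0,T]$. From a sequence $\{\bmu_n\} \subset \mathcal F$, use condition (i) at $t_1$ to extract a subsequence converging in $\mathcal{P}_2(\R \times \{\pm1\})$; iterate at $t_2, t_3, \ldots$ and take a diagonal subsequence $\{\bmu_{n_k}\}$ converging at every $t_j$. The equicontinuity (ii) then upgrades this to uniform convergence on $[0,T]$: given $\varepsilon > 0$, choose $\delta$ from (ii) for $\varepsilon/3$, pick finitely many $t_{j_1}, \ldots, t_{j_M}$ that are $\delta$-dense in $[0,T]$, and use Cauchy-ness of $\{\bmu_{n_k}(t_{j_\ell})\}_k$ at each of these finitely many points together with the triangle inequality to show $\{\bmu_{n_k}\}$ is Cauchy in the sup metric.

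The only subtlety I anticipate is that pre-compactness of $\mathcal F$ should be understood as relative compactness in $\rC([0,T]; \mathcal{P}_2(\R \times \{\pm1\}))$, and to turn the Cauchy sequence above into a convergent one one needs a limit object; however, since $\mathcal{P}_2(\R \times \{\pm1\})$ is a complete metric space (being a closed subset of the 2-Wasserstein space on $\R \times \{\pm1\}$), uniform Cauchy sequences of continuous curves converge uniformly to a continuous limit curve, and no further work is needed. Thus the lemma follows directly from Munkres~47.1 with no new idea beyond identifying the correct metric structure.
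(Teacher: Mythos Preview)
Your proposal is correct and matches the paper's approach exactly: the paper does not give a proof of this lemma at all, but simply states that it is a simplified version of \cite[Theorem~47.1]{Munkres00} applied to the metric space $(\mathcal P_2(\R\times\{\pm1\}),\bW)$. Your write-up goes further by sketching the standard Ascoli--Arzel\`a argument behind that citation, which is fine but not required here.
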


The following theorem provides a lower semi-continuity result on the $L^2(0,T)$-norm of the metric derivative. We expect it to be well-known, but we only found it proven in the PhD thesis \cite[Lemma 8.2.8]{PatricksThesis}. % AGS does NOT put a metric on AC^p. Strange. Hence, we have cannot state it as a standard lsc result

\begin{thm}[Lower semi-continuity of metric derivatives] \label{thm:lsc}
Let $\bmu_n, \bmu : [0, T] \to \mathcal P_2 (\R \times \{\pm1\})$. If $\bW(\bmu_n(t), \bmu(t)) \to 0$ as $n \to \infty$ pointwise for a.e.~$t \in (0,T)$, then
\begin{equation} \label{eqn:lem:metric:der:edge}
  \liminf_{n \to \infty} \int_0^T | \bmu_n' |^2_\bW (t) \, \de t
  \geq \int_0^T | \bmu' |^2_\bW (t) \, \de t.
\end{equation}
\end{thm}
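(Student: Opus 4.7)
The strategy is the classical metric-space lower semicontinuity argument that works on any complete metric space (here $(\mathcal P_2(\R\times\{\pm1\}),\bW)$), following the template used throughout \cite{AmbrosioGigliSavare08}. First I would assume without loss of generality that $\ell \coloneqq \liminf_{n\to\infty}\int_0^T|\bmu_n'|^2_\bW\,\de t$ is finite, and extract a subsequence (not relabelled) along which this liminf is attained as a limit. Then $\{|\bmu_n'|_\bW\}_n$ is bounded in $L^2(0,T)$, and by reflexivity a further (not relabelled) extraction produces a nonnegative $g\in L^2(0,T)$ with $|\bmu_n'|_\bW\weakto g$ weakly in $L^2(0,T)$. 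Weak lower semicontinuity of $\|\cdot\|_{L^2(0,T)}^2$ then immediately yields $\int_0^T g^2\,\de t \leq \ell$.

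The heart of the argument is to show that $g$ is an admissible metric-speed bound for $\bmu$, which forces $|\bmu'|_\bW \leq g$ a.e. To this end, let $N\subset(0,T)$ denote the negligible set outside of which $\bW(\bmu_n(t),\bmu(t))\to 0$. For any pair $s,t\in (0,T)\setminus N$ with $s<t$, admissibility of $|\bmu_n'|_\bW$ in \eqref{eqn:defn:AC:bd} gives
\[
\bW\big(\bmu_n(s),\bmu_n(t)\big) \leq \int_s^t |\bmu_n'|_\bW(r)\,\de r.
\]
Passing to the limit, the left-hand side converges to $\bW(\bmu(s),\bmu(t))$ by two applications of the triangle inequality for $\bW$, while the right-hand side, rewritten as $\int_0^T |\bmu_n'|_\bW(r) \, \mathds{1}_{(s,t)}(r) \, \de r$, converges to $\int_s^t g(r)\,\de r$ thanks to the weak $L^2$-convergence tested against $\mathds{1}_{(s,t)}\in L^2(0,T)$. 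Hence
\[
\bW(\bmu(s),\bmu(t)) \leq \int_s^t g(r)\,\de r \quad\text{for all } s<t \text{ in } (0,T)\setminus N.
\]

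It remains to promote this estimate to all of $(0,T)$ and to conclude. Since the right-hand side is absolutely continuous in $(s,t)$, the bound above provides uniform continuity of $\bmu\big|_{(0,T)\setminus N}$, so $\bmu$ admits a continuous extension to $[0,T]$ (redefining it on $N$) which satisfies the same bound for every pair $0<s<t<T$. This exhibits $g$ as an admissible integrand in \eqref{eqn:defn:AC:bd}, so $\bmu\in\AC^2(0,T;\mathcal P_2(\R\times\{\pm1\}))$ and, by the minimality characterisation of the metric derivative in \cite[Theorem~1.1.2]{AmbrosioGigliSavare08}, $|\bmu'|_\bW(t)\leq g(t)$ for a.e.~$t\in(0,T)$. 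Integrating and chaining gives \eqref{eqn:lem:metric:der:edge}.

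The step I expect to be most delicate is the last one: redefining $\bmu$ on the null set $N$ to obtain a genuinely continuous representative on $[0,T]$ (without which the pointwise metric derivative is not meaningfully defined) and then justifying that the minimality property of $|\bmu'|_\bW$ applies. Everything else, i.e., the weak compactness in $L^2(0,T)$, the weak lower semicontinuity of the squared norm, and passage to the limit in the AC inequality on the dense set, is routine.
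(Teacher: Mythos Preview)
Your proof is correct and follows the standard weak-$L^2$-compactness template (essentially the argument behind \cite[Proposition~2.2.3]{AmbrosioGigliSavare08}), but it is genuinely different from the route taken in the paper. The paper instead invokes the \emph{other} half of \cite[Theorem~1.1.2]{AmbrosioGigliSavare08}: for a countable dense set $(t_\ell)\subset[0,T]$ of convergence points it writes $|\bmu_n'|_\bW(t)=\sup_\ell |(D_n^\ell)'(t)|$ with $D_n^\ell(t)\coloneqq\bW(\bmu_n(t_\ell),\bmu_n(t))$, shows $D_n^\ell\weakto D^\ell$ in $H^1(0,T)$, and then combines lower semicontinuity of the $H^1$-seminorm on finitely many pieces of a measurable partition of $(0,T)$ with Monotone Convergence in $L$. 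Your approach is shorter and avoids both the distance-function characterisation and the partition argument; the paper's approach, on the other hand, never needs to identify a single weak limit $g$ of the speeds nor to redefine $\bmu$ on the null set, since it works entirely with the scalar distance functions $D^\ell$, whose $H^1$-regularity comes for free from the uniform bound. Both routes rest on the same complete-metric-space machinery and yield the same conclusion; yours is the more economical one here.
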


\begin{proof}
We start with several preparations. First, we take a dense subset $(t_\ell)_\ell$ of $[0, T]$ for which $\bW(\bmu_n(t_\ell), \bmu(t_\ell)) \to 0$ as $n \to \infty$ for any $\ell \in \N$. Second, without loss of generality, we assume that there exists $C > 0$ such that for all $n$
\begin{equation} \label{p:bdd:MD}
  \int_0^T | \bmu_n' |^2_\bW (t) \, \de t \leq C.
\end{equation}
In particular, this means that $\bmu_n$ has a representative in $\AC^2(0,T; \mathcal P_2 (\R \times \{\pm1\}))$ which is defined for all $t \in (0,T)$. Taking this representative, we set $D_n^\ell (t) \coloneqq {\bW} ( \bmu_n (t_\ell), \bmu_n (t) )$, and obtain from \cite[Thm.~1.1.2]{AmbrosioGigliSavare08} that
\begin{equation} \label{eqn:conj:metric:der:edge:pf:m1}
  | \bmu_n' |_{\bW} (t) = \sup_{\ell \in \N{}} \big| (D_n^\ell)' (t) \big|
  \qquad \text{for a.e.~$t \in (0,T)$.} 
\end{equation}
%where $(t_\ell) \subset [0, T]$ is dense. 

Next we prove \eqref{eqn:lem:metric:der:edge}. Firstly, since $\bW (\bmu_n(t), \bmu(t)) \to 0$ as $n \to \infty$ for a.e.~$t\in(0,T)$, we have for fixed $\ell\in\N{}$ and for a.e.~$t \in (0, T)$ that
\begin{equation} \label{eqn:conj:metric:der:edge:pf:0}
  \big| D_n^\ell (t) - D^\ell (t) \big| \xto{n \to \infty} 0,
  \qquad \text{where } D^\ell (t) \coloneqq {\bW} \big( \bmu (t_\ell), \bmu (t) \big). 
\end{equation}
Secondly, $\| D_n^\ell \|_{H^1(0, T)}$ and $\| D^\ell \|_{H^1(0, T)}$ are bounded uniformly in $n$ and $\ell$. To see this, we have by the definition of the metric derivative and \eqref{p:bdd:MD} that
\begin{equation*}
  D_n^\ell (t) = \bigg| \int_{t_\ell}^t | \bmu_n' |_{\bW}(s)\,\de s \bigg| \leq C \sqrt{T}.
\end{equation*}
Hence, $\| D_n^\ell \|_{L^2(0, T)}$ is uniformly bounded. 
With the characterisation of $| \bmu_n' |_{\bW}$ in \eqref{eqn:conj:metric:der:edge:pf:m1}, we estimate
\begin{equation} \label{eqn:conj:metric:der:edge:pf:1}
  C %\MARCO{T}
  \geq \int_0^T | \bmu_n' |^2_{\bW} (t) \, \de t
  \geq \int_0^T \big( (D_n^\ell)' (t) \big)^2 \, \de t
  \qquad \text{for all $\ell \in \N{}$,}
\end{equation}
and thus $\| D_n^\ell \|_{H^1(0, T)}$ is uniformly bounded. 
Therefore, in view of \eqref{eqn:conj:metric:der:edge:pf:0}, we have
\begin{equation} \label{eqn:conj:metric:der:edge:pf:1aaa}
  D_n^\ell \weakto D^\ell
  \qquad \text{in $H^1(0, T)$ as $n \to \infty$.} % no subseq needed; the limit is unique
\end{equation}
In particular, we observe from \eqref{eqn:conj:metric:der:edge:pf:1aaa} that $D^\ell \in H^1(0, T)$ and that
\begin{equation*} %\label{eqn:conj:metric:der:edge:pf:1aa}
  C \geq \liminf_{n \to \infty} \| D_n^\ell \|_{H^1(0, T)} \geq \| D^\ell \|_{H^1(0, T)}
  \qquad \text{for all $\ell \in \N{}$.}
\end{equation*}

To establish \eqref{eqn:lem:metric:der:edge}, we carefully perform a joint limit passage as $n \to \infty$ and a maximisation over $\ell$ in \eqref{eqn:conj:metric:der:edge:pf:1}. 
With this aim, we take a large fixed $L \in \N{}$, and choose a partition $\{A_\ell \}_{\ell=1}^{L}$ of Borel sets of $(0, T)$ such that for all $\ell = 1, \ldots, L$,
\begin{equation*}
 \big| (D^\ell)' (t) \big| 
  = \sup_{1 \leq \tilde \ell \leq L} \big| (D^{\tilde \ell})' (t) \big|
  \qquad \text{for a.e.~$t \in A_\ell$.}
\end{equation*}
We estimate
\[
  \int_0^T | \bmu_n' |^2_{\bW} (t) \, \de t
  \geq \int_0^T \sup_{1 \leq \ell \leq L} \big( (D_n^\ell)' (t) \big)^2 \, \de t 
  \geq \sum_{\ell = 1}^{L} \int_{A_\ell} \big( (D_n^\ell)' (t) \big)^2 \, \de t.
\]
Using \eqref{eqn:conj:metric:der:edge:pf:1aaa}, we pass to the limit $n \to \infty$ to obtain
\begin{equation*} %\label{eqn:conj:metric:der:edge:pf:2}
  \liminf_{n \to \infty} \int_0^T | \bmu_n' |^2_{\bW} (t) \, \de t
  \geq \sum_{\ell = 1}^{L} \int_{A_\ell} \big( (D^\ell)' (t) \big)^2 \, \de t 
  = \int_0^T \sup_{1 \leq \ell \leq L} \big( (D^\ell)'(t) \big)^2 \, \de t.
\end{equation*}
By using the Monotone Convergence Theorem, we take the supremum over $L \in \N{}$ to deduce that
\begin{equation*} %\label{eqn:conj:metric:der:edge:pf:2}
  \liminf_{n \to \infty} \int_0^T | \bmu_n' |^2_{\bW} (t) \, \de t
  \geq  \int_0^T \sup_{\ell \in \N{}} \big( (D^\ell)'(t) \big)^2 \, \de t.
\end{equation*}
We conclude by using \cite[Theorem~1.1.2]{AmbrosioGigliSavare08} to identify $\sup_{\ell \in \N{}} | (D^\ell)' |$ in $L^2(0,T)$ by $| \bmu' |_{\bW}$.
\end{proof}

Next we introduce the \textit{narrow convergence} of measures. For $\nu_n, \nu \in \mathcal M (\R)$, we say that $\nu_n$ converges in the narrow topology to $\nu$ (and write $\nu_n \weakto \nu$) as $n \to \infty$ if
\begin{equation*}
  \int \varphi \, \de\nu_n 
  \xto{ n \to \infty } \int \varphi \, \de\nu.
\end{equation*}
for any bounded test function $\varphi \in \rC (\R)$. The following lemma extends this notion for non-negative measures by allowing for discontinuous test functions. %\cite{Schwartz1981}. 

\begin{lem}[{\cite[Lemma 2.1]{Poupaud2002}}] \label{l:Schwartz81} % see Poupaud02 Lem 2.1
Let $\nu_n \weakto \nu$ in $\mathcal M_+ (\R^d)$. Let $A \in \mathcal B(\R^d)$ such that $\nu(A) = 0$. Then for every bounded $\varphi \in \rC(\R^d \setminus A)$ it holds that
\begin{equation*}
  \int \varphi \, \de\nu_n 
  \xto{ n \to \infty } \int \varphi \, \de\nu.
\end{equation*}
\end{lem}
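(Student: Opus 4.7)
The plan is to reduce the claim to a semicontinuous version of Portmanteau's theorem. First I would extend $\varphi$ to a bounded Borel function $\tilde\varphi$ on $\R^d$, say by setting $\tilde\varphi \equiv 0$ on $A$; this extension has its discontinuity set contained in $A$, hence $\nu$-negligible. Next I introduce the upper and lower semicontinuous envelopes
\[
  \tilde\varphi^*(x) \coloneqq \limsup_{y \to x} \tilde\varphi(y), \qquad
  \tilde\varphi_*(x) \coloneqq \liminf_{y \to x} \tilde\varphi(y),
\]
which are bounded by $\|\varphi\|_\infty$ and satisfy $\tilde\varphi_* \leq \tilde\varphi \leq \tilde\varphi^*$ with equality at every continuity point of $\tilde\varphi$. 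Since $\nu(A) = 0$, this equality holds $\nu$-a.e., so $\int \tilde\varphi^* \, \de\nu = \int \tilde\varphi \, \de\nu = \int \tilde\varphi_* \, \de\nu$.

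Second, I would prove the auxiliary inequality: for any bounded upper semicontinuous $f \colon \R^d \to \R$, $\limsup_n \int f \, \de\nu_n \leq \int f \, \de\nu$, together with its symmetric lower semicontinuous analogue. To do so I would approximate $f$ from above by the Moreau--Yosida envelopes $f_k(x) \coloneqq \sup_{y \in \R^d} \{ f(y) - k|x-y| \}$, which are $k$-Lipschitz, uniformly bounded by $\|f\|_\infty$, and---thanks to upper semicontinuity of $f$---decrease pointwise to $f$ as $k \to \infty$. Since $f \leq f_k \in \rC_b(\R^d)$, narrow convergence $\nu_n \weakto \nu$ yields $\limsup_n \int f \, \de\nu_n \leq \int f_k \, \de\nu$, and dominated convergence on the $\nu$-side then lets $k \to \infty$.

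Combining these ingredients by applying the lower semicontinuous Portmanteau inequality to $\tilde\varphi_*$ and the upper version to $\tilde\varphi^*$, I would obtain the two-sided squeeze
\[
  \int \tilde\varphi\, \de\nu = \int \tilde\varphi_*\, \de\nu \leq \liminf_{n} \int \tilde\varphi\, \de\nu_n \leq \limsup_{n} \int \tilde\varphi\, \de\nu_n \leq \int \tilde\varphi^*\, \de\nu = \int \tilde\varphi\, \de\nu,
\]
where the monotonicity $\tilde\varphi_* \leq \tilde\varphi \leq \tilde\varphi^*$ controls the inner inequalities. This forces convergence, and since $\nu(A) = 0$ the common limit equals $\int \varphi\, \de\nu$. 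The step I expect to require the most care is the auxiliary inequality, specifically verifying $f_k \downarrow f$ pointwise---not merely $\nu$-a.e.---which is precisely why one must pass through the everywhere-semicontinuous envelopes $\tilde\varphi^*, \tilde\varphi_*$ instead of attempting to approximate $\tilde\varphi$ directly. I also note that testing narrow convergence against the constant $1$ gives $\nu_n(\R^d) \to \nu(\R^d)$, which legitimises the finite-measure Portmanteau statements invoked above.
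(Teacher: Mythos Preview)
The paper does not prove this lemma; it merely cites references (Schwartz, Delort, G\'erard, and Schochet for the case of closed $A$), so there is no in-paper argument to compare against.

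Your Portmanteau-style route via semicontinuous envelopes and Moreau--Yosida regularisation is standard, and its core is sound. However, your opening step contains a genuine gap. You claim that the extension $\tilde\varphi$ obtained by setting $\tilde\varphi \equiv 0$ on $A$ has its discontinuity set contained in $A$. For a general Borel set $A$ this is false: at any point $x \in (\R^d \setminus A) \cap \overline{A}$ with $\varphi(x) \neq 0$, nearby points $y \in A$ give $\tilde\varphi(y) = 0 \neq \varphi(x)$, so $\tilde\varphi$ is discontinuous at $x \notin A$. The discontinuity set is only guaranteed to lie in $\overline{A}$, and nothing controls $\nu(\overline{A})$. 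Concretely, take $d=1$, $A = \mathbb{Q}$, $\varphi \equiv 1$ on $\R \setminus \mathbb{Q}$, $\nu$ Lebesgue measure on $[0,1]$, and $\nu_n = n^{-1}\sum_{k=1}^n \delta_{k/n}$: then $\nu_n \weakto \nu$ and $\nu(A)=0$, yet $\int \tilde\varphi\,\de\nu_n = 0 \not\to 1 = \int \varphi\,\de\nu$. Thus the statement, read with $\varphi$ defined only on $\R^d \setminus A$ and extended by zero, is actually false in this generality.

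The repair is interpretational. In the paper's only application, $A$ is the closed diagonal in $\R^2$ and the test function $(x,y) \mapsto [(\varphi^\pm)'(x) - (\varphi^\pm)'(y)]\,V'(x-y)$ is globally defined (recall $V'(0) \coloneqq 0$) with discontinuity set inside $A$. Under either of the hypotheses (i) $A$ is closed, or (ii) $\varphi$ is a bounded Borel function on all of $\R^d$ whose discontinuity set lies in $A$, your extension step becomes correct or unnecessary, and the remainder of your argument goes through verbatim.
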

Proofs can be found in \cite[Theorems~62-63, chapter~IV, paragraph~6]{Schwartz1981} and in \cite{Delort1991,Gerard1992}, or \cite{Schochet1995} in the case where $A$ %$\mathcal F$ 
is closed. %\PATRICK{[Marco; this is a passage from Poupaud02's paper (in Dropbox). Could you fix the references, please?]} \MARCO{done!}

Finally, we state and prove a lemma which allows us to show that Assumption~\ref{ass:IC} is conserved in the limit as $n \to \infty$.
%We conclude this section with the following lemma, which will be useful in the sequel.
%In view of Proposition \ref{prop:muPn}\eqref{prop:muPn:muka:rel} guaranteeing that $\supp \tilde \mu_n^+$ and $\supp \tilde \mu_n^-$ are disjoint, we establish the following auxiliary lemma:
%\PATRICK{[ed.: REason for changing your comment: it reads nicely, but it contains no information to our result (by definition, any lemma is useful in the sequel ;)). Of course, you were right that the previous reference to a future Proposition was bad; this is updated now]}
\begin{lem}[Narrow topology preserves separation of supports] \label{lem:supp:sep}
Let $(\nu_\varepsilon)_{\varepsilon > 0}, (\rho_\varepsilon)_{\varepsilon > 0} \subset \mathcal M_+ (\R)$ converge in the narrow topology as $\varepsilon \to 0$ to $\nu$ and $\rho$ respectively. If
\begin{equation*}
\forall \, \varepsilon > 0 : \sup ( \supp \nu_\varepsilon ) \leq \inf ( \supp \rho_\varepsilon ),
\end{equation*}
then also $\sup ( \supp \nu ) \leq \inf ( \supp \rho )$.
\end{lem}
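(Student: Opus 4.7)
The plan is to argue by contradiction. Suppose $\sup(\supp \nu) > \inf(\supp \rho)$. Then both supports are nonempty, and there exist points $a \in \supp \nu$ and $b \in \supp \rho$ with $b < a$. Pick any $c \in (b,a)$. By the definition of support, $a \in \supp \nu$ yields $\nu((c,\infty)) > 0$ (since $(c,\infty)$ is an open neighbourhood of $a$), and similarly $b \in \supp \rho$ yields $\rho((-\infty,c)) > 0$.

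Next I invoke the Portmanteau direction of narrow convergence: for any open set $U \subset \R$, $\liminf_{\varepsilon \to 0} \nu_\varepsilon(U) \geq \nu(U)$ and likewise for $\rho_\varepsilon$. Applied to the open sets $(c, \infty)$ and $(-\infty, c)$, this gives
\begin{equation*}
\liminf_{\varepsilon \to 0} \nu_\varepsilon((c,\infty)) \geq \nu((c,\infty)) > 0,
\qquad
\liminf_{\varepsilon \to 0} \rho_\varepsilon((-\infty,c)) \geq \rho((-\infty,c)) > 0.
\end{equation*}
Hence there exists $\varepsilon_0 > 0$ such that for all $0 < \varepsilon < \varepsilon_0$, both $\nu_\varepsilon((c,\infty)) > 0$ and $\rho_\varepsilon((-\infty,c)) > 0$.

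Finally, from $\nu_\varepsilon((c,\infty)) > 0$ I deduce $\sup(\supp \nu_\varepsilon) > c$ (otherwise $\supp \nu_\varepsilon \subset (-\infty, c]$ would force $\nu_\varepsilon((c,\infty)) = 0$), and analogously $\inf(\supp \rho_\varepsilon) < c$. Combining these,
\begin{equation*}
\sup(\supp \nu_\varepsilon) > c > \inf(\supp \rho_\varepsilon),
\end{equation*}
which contradicts the standing hypothesis $\sup(\supp \nu_\varepsilon) \leq \inf(\supp \rho_\varepsilon)$, proving the claim. The only delicate point to watch out for is the edge case where $\nu$ or $\rho$ is the zero measure, in which case the inequality $\sup(\supp \nu) \leq \inf(\supp \rho)$ holds trivially under the convention $\sup \emptyset = -\infty$, so the contradiction argument never needs to be started.
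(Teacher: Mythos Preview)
Your proof is correct and follows essentially the same contradiction strategy as the paper: assume the limiting supports overlap, pick a separating point, and use narrow convergence to push the overlap back to the $\varepsilon$-level, contradicting the hypothesis. The only cosmetic difference is that you invoke the Portmanteau inequality $\liminf_{\varepsilon\to 0}\nu_\varepsilon(U)\geq \nu(U)$ for open $U$, whereas the paper builds an explicit continuous cutoff $\varphi$ and uses the definition of narrow convergence directly; these are equivalent formulations of the same fact.
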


\begin{proof}
We reason by contradiction. Suppose $M := \sup ( \supp \nu ) > \inf ( \supp \rho ) =: m$. Take a non-decreasing test function $\varphi \in \rC_b(\R)$ which satisfies
\begin{equation*}
   \varphi \equiv 0 \text{ on } \Big(-\infty, \frac{m + 2M}3 \Big],
   \quad \text{and} \quad
   \varphi \equiv 1 \text{ on } [M, \infty ).
\end{equation*} 
Since $M = \sup ( \supp \nu )$, it holds that $\int \varphi \, \de \nu > 0$. Hence, from $\nu_\varepsilon \xto{\varepsilon \to 0} \nu$ we infer that for all $\varepsilon$ small enough, it also holds that $\int \varphi \, \de \nu_\varepsilon > 0$, and thus \[ \sup ( \supp \nu_\varepsilon ) \geq \frac{m + 2M}3. \]
With a similar argument, we can deduce that $\inf ( \supp \rho_\varepsilon ) \leq \frac{2m + M}3$, which contradicts with $m < M$.
\end{proof}

\section{Definition and properties of the discrete problem (\ref{Pn:intro})}
\label{s:Pn}

In this section we give a rigorous definition to the discrete dynamics formally given by \eqref{Pn:intro}. 
We start by giving the definition of solution, establishing some properties of the energy $E_n$ introduced in \eqref{101}, and proving an existence and uniqueness result (see Proposition~\ref{prop:Pn}).
Finally, we state the discrete problem in the language of measures (see Proposition~\ref{prop:muPn}).

\begin{problem}\label{problemPn}
Given $(x^\circ,b^\circ)\in\R^{n}\times\{\pm1\}^n$ such that $x^\circ_1<x^\circ_2<\ldots<x^\circ_n$, find $(x,b)\colon [0,T]\to\R^n\times\{-1,0,1\}^n$ such that
\begin{equation} \label{Pn}%\tag{$P_n$}
\begin{cases}
\displaystyle  \frac \de{\de t} x_i = - \frac1{n} \sum_{ j \,:\, b_i b_j = 1 } V' (x_i - x_j) - \frac1{n} \sum_{ j\,:\, b_i b_j = -1 } W' (x_i - x_j) \quad \text{on $(0, T) \setminus T_{\col}$} \\
  (x_i(0),b_i(0))=(x^\circ_i,b^\circ_i),
  \end{cases}
\end{equation}
for all $i = 1,\ldots,n$, where $T_{\col}$ is the jump set of $b$.
\end{problem}

We encode the annihilation rule in the solution concept below. With this aim, we set $H\colon\R\cup\{+\infty\}\to[0,1]$ as the usual Heaviside function, with $H(0) \coloneqq 0$ and $H(+\infty) \coloneqq 1$. 

\begin{defn}[Solution to Problem~\ref{problemPn}]\label{defSolPn}
We say that $(x,b)\colon [0,T]\to\R^n\times\{-1,0,1\}^n$ is a solution to Problem~\ref{problemPn} if
\begin{enumerate}[(a)]
%x_1,\ldots,x_n\colon [0,T]\to\R \quad\text{such that} \quad 
\item $x \in \Lip([0, T];\R^{n})$; %\qquad %\text{for all $
%i=1,\ldots,n,%}
\item \eqref{Pn} is satisfied in the classical sense;
\item there exists a vector of collision times $\tau = (\tau_1, \ldots, \tau_n)$ with $\tau_i \in (0,T) \cup \{+\infty\}$ such that, setting 
\begin{equation}\label{108}
T_{\col} 
\coloneqq \{ \tau_i : 1 \leq i \leq n \} \setminus \{+ \infty \}
= \{t_1, t_2, \ldots, t_K\} 
\subset (0, T)
\end{equation}
with $0 %\eqqcolon t_0 
< t_1 < \ldots < t_K < %t_{K+1} \coloneqq 
T$, there holds
\begin{equation}\label{107}
b_i(t) \coloneqq b_i^\circ H (\tau_i - t)\qquad \text{for all $i=1,\ldots,n$};
\end{equation} 
  \item \label{prop:Pn:tk} setting $t_0\coloneqq0$, for all $k = 1, \ldots, K$,
  \[ t_k = \inf \big\{ t \in(0,T): \exists \, {(i,j)} \text{ such that } b_i(t_{k-1}) b_j(t_{k-1}) = -1 \text{ and } x_i(t) = x_j(t) \big\} > t_{k-1}; \]
  \item \label{prop:Pn:ann:pairs} at each time $t \in [0,T]$, there is a bijection %between the index sets
  \[ \alpha\colon \{ i : b_i^\circ = 1, \, \tau_i \leq t \} \to\{ j : b_j^\circ = -1, \, \tau_j \leq t \} \] 
  such that %each pair $(i,j)$ satisfies 
  $x_i(t) = x_{\alpha(i)}(t)$.
\end{enumerate}
\end{defn}

%\PATRICK{[Little doubt: technically, particles with 0 charge still 'collide' with other particles, but they don't annihilate (the collision rule is simple: do nothing). I'm willing to ignore this annoying detail. On a separate note, I don't know whether it is OK to call both $\tau_i$ and $t_k$ collision times. I didn't find any better alternative though. At the moment, we call both of them collision times. OK by my, but if you have a better idea, please go ahead and implement it.]}
%\MARCO{The only thing that changes is the perspective: the $\tau_i$ are from the point of view of the particles, the $t_k$ from the point of view of the dynamics.}

\begin{rem}[Comments on Definition~\ref{defSolPn}]\label{comments:to:Pn}
\normalfont
We collect here some remarks on the notion of solution presented %statement of the proposition 
above.
\begin{itemize}
\item $\tau_i$ is the time at which particle $x_i$ gets annihilated: equation \eqref{107} describes this by putting to zero the charge $b_i$ at time $\tau_i$.
If $\tau_i=+\infty$, then it means that the particle $x_i$ does not collide in the time interval $(0,T)$.
\item $(t_k)$ is the ordered list of collision times at which at least one collision occurs.
\item In equation \eqref{Pn}, both $x_i$ and $b_i$ depend on time. However, on each open component of $(0, T) \setminus T_{\col}$, the charges $b_i$ remain constant.
\item Property \eqref{prop:Pn:tk} ensures that for each pair of two colliding particles, at least one gets annihilated. Property \eqref{prop:Pn:ann:pairs} ensures that both particles are getting annihilated, and that annihilation can only occur for colliding particles with non-zero charge.
\end{itemize}
\end{rem}

With reference to the collision times $t_1 < \ldots < t_K$ in \eqref{108}, we define the set of indices of the particles colliding at $t_k$ and its cardinality by
\begin{equation}\label{tK}
\Gamma_k\coloneqq \{i:\tau_i=t_k\}, \qquad \gamma_k\coloneqq\#\Gamma_k.
\end{equation}
We observe that $\gamma_k$ is even for every $k$ and that
\begin{equation}\label{eq:sum:gammak}
\sum_{k=1}^K \gamma_k\leq \frac{n}{2}.
\end{equation}

We first establish some properties of $E_n$ defined in \eqref{101}. For convenience, we display
\begin{equation} \label{f:En:der}
\frac\partial {\partial x_i} E_n (x; b) 
  = \frac1{n^2} \sum_{ j \,:\, b_i b_j = 1 } V' (x_i - x_j) + \frac1{n^2} \sum_{ j \,:\, b_i b_j = -1 } W' (x_i - x_j),
\end{equation}
where we rely on the choice $V'(0) = 0$.
We also introduce 
\[
  M_k : \R^n \to [0, \infty), \qquad 
  M_k(x) \coloneqq %\int_\R y^k \,\de\mu_n(t)(y) = 
  \frac1n \sum_{i=1}^n |x_i|^k, \qquad k=1,2,\ldots
 % M_n^{(4)} (t) \coloneqq \int_\R y^4 \,\de\mu_n(t)(y) =\frac1n \sum_{i=1}^n (x_i(t))^4,
\]
which is the $k$-th moment of the empirical measure related to the particles $x_1,\ldots,x_n$.

\begin{lem}[Properties of $E_n$] \label{lem:En}
Let $n \geq 2$. For any $x \in \R^n$ and $b \in \{-1, 0, 1\}^n$, the following properties hold:
\begin{enumerate}[(i)]
  \item \label{lem:En:infty} $E_n(x;b) < +\infty$ if and only if $\forall \, i \neq j : x_i = x_j \: \Rightarrow \: b_i b_j \neq 1$;
  
  \item \label{lem:En:bd:below} $E_n+M_2$ is bounded from below;
  
  \item \label{lem:En:regy} $\nabla E_n$ is Lipschitz continuous on the sublevelsets of $y \mapsto E_n(y;b) + 2M_2(y)$; %\subset \R^n$;
  
  \item \label{lem:En:decr:at:jump} if $E_n(x;b) < +\infty$ and if there exists an index pair $(I, J)$ which satisfies $b_I b_J = -1$ and $x_I = x_J$, then, there exists $C > 0$ independent of $n$ such that 
  \[ E_n(x; \bar b) \leq E_n(x; b) +\frac{C}{n}(M_2(x) + x_I^2 + 1), \] 
  where $\bar b$ is the modification of $b$ in which $b_I$ and $b_J$ are put to $0$.
\end{enumerate}
\end{lem}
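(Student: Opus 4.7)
The plan is to verify the four properties one by one, using Assumption \ref{ass:VW} together with the bounds \eqref{402} and \eqref{VWgrowth}. For (i), I would observe that $W \in \rC^1(\R)$ makes every $W$-term in \eqref{101} finite, so the only possible infinity comes from a $V$-term with $x_i=x_j$; these appear only when $b_ib_j=1$, giving the claimed equivalence. For (ii), the strategy is to absorb the logarithmic tails into $M_2$. Using \eqref{402}, plus the fact that $V$ is bounded below on $\{|r|\leq 1\}$ (by continuity away from $0$ and the blow-up $V(r)\to+\infty$) while $W$ is continuous on $\R$, I obtain the global lower bound $V(r),W(r)\geq -C\bigl(1+\log(1+|r|)\bigr)$. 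Then $\log(1+|x_i-x_j|)\leq |x_i|+|x_j|$ and summing over pairs yield $E_n(x;b)\geq -C-CM_1(x)\geq -C(1+M_2(x))$ by Cauchy--Schwarz.

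For (iii), set $S_K\coloneqq\{y\in\R^n : E_n(y;b)+2M_2(y)\leq K\}$. Part (ii) gives $M_2(y)\leq K+C$ on $S_K$, so $S_K$ is contained in a ball of $\R^n$. For any fixed same-sign pair $(i,j)$ with $b_ib_j=1$, isolating the corresponding term in \eqref{101} and bounding the remaining interactions from below via the bounds established in (ii) produces an \emph{upper} bound $V(y_i-y_j)\leq C(K,n)$ uniformly on $S_K$. Since $V(r)\to+\infty$ as $r\to 0$, this forces a uniform separation $|y_i-y_j|\geq \delta(K,n)>0$ for every same-sign pair. On this bounded, separated region, $V'\in\Lip_{\loc}(\R\setminus\{0\})$ gives Lipschitz continuity of each $V'(y_i-y_j)$ in $y$, while $W'\in\Lip(\R)$ handles the opposite-sign terms; the expression \eqref{f:En:der} for $\nabla E_n$ then yields Lipschitz continuity on $S_K$.

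For (iv), I would compute the difference directly. Because $b$ and $\bar b$ agree outside the indices $I,J$, all pairs $(i,j)\notin\{I,J\}\times\{I,J\}\cup\{(I,J),(J,I)\}$ with neither endpoint equal to $I$ or $J$ cancel. The surviving contributions split by case: $V$-type from pairs $(I,j)$ with $b_j=b_I$ (and from $(J,j)$ with $b_j=-b_I$), and $W$-type from the complementary sign choice, plus the pair $(I,J)$ itself contributing $W(0)$. Exploiting the evenness of $V,W$ and the identity $x_I=x_J$, all these contributions merge into a single sum, giving
\[
E_n(x;b)-E_n(x;\bar b) \;=\; \frac{1}{n^2}\sum_{\substack{j\neq I,J\\ b_j\neq 0}} (V+W)(x_I-x_j) \;+\; \frac{W(0)}{n^2}.
\]
Applying $(V+W)(r)\geq -Cr^2$ from \eqref{VWgrowth} and $(x_I-x_j)^2\leq 2x_I^2+2x_j^2$, then recognising the resulting sum as $n$ times $2x_I^2+2M_2(x)$, delivers the desired estimate $E_n(x;\bar b)-E_n(x;b)\leq \tfrac{C}{n}(M_2(x)+x_I^2+1)$.

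The only genuinely delicate step is the bookkeeping in (iv): keeping track of which pairs change their status between $b$ and $\bar b$ according to the four sign configurations of $b_j$ relative to $b_I$ and $b_J$, and verifying that the identity $x_I=x_J$ collapses the $V$- and $W$-sums into a single $(V+W)$-sum over all charged neighbours. Parts (i) and (ii) are essentially bookkeeping from the growth bounds, and (iii) is a routine application of the moment estimate once the lower bound on same-sign separations is extracted from the energy.
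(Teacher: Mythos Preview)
Your proposal is correct and follows essentially the same route as the paper. The only noticeable variation is in (ii): the paper rewrites $M_2(x)=\tfrac{1}{2n^2}\sum_{i,j}(x_i^2+x_j^2)$, pairs each interaction term with $\tfrac12(x_i-x_j)^2$, and invokes that $r\mapsto \tfrac12 r^2-C[\log|r|]_+$ is bounded below; you instead bound $V,W\geq -C(1+\log(1+|r|))$, use $\log(1+|r|)\leq|r|$ to reduce to $M_1$, and finish with $M_1\leq\sqrt{M_2}$ so that $E_n+M_2\geq M_2-C\sqrt{M_2}-C$ is bounded below. Both arguments are short and equivalent in spirit. For (iii) the paper merely says ``follows easily from (ii)'', and your sketch supplies exactly the missing details (moment bound $\Rightarrow$ boundedness of $S_K$, energy upper bound on a single $V$-term $\Rightarrow$ uniform same-sign separation, then local Lipschitz regularity of $V'$ and global Lipschitz of $W'$). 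For (iv) your computation of $E_n(x;b)-E_n(x;\bar b)$ matches the paper's up to an immaterial factor of $2$ in front of the $(V+W)$-sum, and the conclusion via \eqref{VWgrowth} is identical.
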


\begin{proof}
Property \eqref{lem:En:infty} is a direct consequences of the properties of $V, W$ (see Assumption~\ref{ass:VW}). 
Property \eqref{lem:En:bd:below} is a matter of a simple estimate.
Using Assumption~\ref{ass:VW}) (in particular \eqref{402}), some manipulations inspired by \cite{Schochet1995}, and $r\mapsto r^2 - C \log r$ being bounded from below, we obtain
\begin{multline*}
E_n(x;b)+M_2(x) 
= \frac1{2n^2} \Big( \sum_{ \substack{ i \neq j \\ b_i b_j = 1 } } V (x_i - x_j) + \sum_{ \substack{ i, j \\ b_i b_j = -1 } } W(x_i - x_j) + \sum_{i,j=1}^n (x_i^2+x_j^2) \Big) \\
\geq \frac1{2n^2} \sum_{i,j=1}^n \Big( -C \big( [\log |x_i - x_j| ]_+ + 1 \big) +  \frac12 (x_i - x_j)^2 \Big)
\geq C.
\end{multline*}

Property \eqref{lem:En:regy} follows easily from property \eqref{lem:En:bd:below}.
% To prove (i), just use the fact that $V$ isn't defined in $0$.
% To prove (ii), use \eqref{ass:VW:bds} to show that $V\geq0$. Again \eqref{ass:VW:bds} shows that $W\geq-C$.
% 
To prove \eqref{lem:En:decr:at:jump}, we set $y := x_I = x_J$ and assume for convenience that $b_I = 1$ and $b_J = -1$. Then, we compute
\begin{equation*}
\begin{split} E_n(x; b) - E_n(x; \bar b) 
  ={} & \frac1{2n^2} \bigg( \sum_{ \substack{ j \neq I \\ b_j = 1 } } V (x_I - x_j) + \sum_{ \substack{ i \neq J \\ b_i = -1 } } V (x_i - x_J) \bigg) \\
  & + \frac1{2n^2} \bigg( \sum_{ j \,:\, b_j = -1 } W (x_I - x_j) + \sum_{ i \,:\, b_i = 1 } W (x_i - x_J) \bigg) - \frac{W(0)}{2 n^2} \\
  ={} & \frac1{2n^2} \bigg( \sum_{ \substack{ i = 1 \\ i \neq I, J } }^n |b_i| V (x_i - y) + \sum_{ i = 1 }^n |b_i| W (x_i - y) \bigg) - \frac{W(0)}{2 n^2} \\
  ={} & \frac1{2n^2} \sum_{\substack{i=1 \\ i\neq I,J}}^n |b_i| (V+W)(x_i-y)+ \frac{W(0)}{2 n^2} \\
  \geq{} & -\frac{C}{n^2}\sum_{i=1}^n(x_i-y)^2+ \frac{W(0)}{2 n^2} 
  \geq -\frac{C}{n} (M_2(x) + y^2 + 1),
\end{split}
\end{equation*}
where %the last inequality follows from $V \geq -W$. %, see \eqref{ass:VW:bds}.
we have used \eqref{VWgrowth}.
\end{proof}

%Next we define $(P_n)$ rigorously (see Proposition \ref{prop:Pn}).
%We also define $f(a-) \coloneqq \lim_{y \uparrow a} f(y)$. % already in Notation

We now prove that Problem~\ref{problemPn} has a unique solution. In addition, we establish several properties of it. 
%We recall that $\Gamma_k$ and $\gamma_k$ are defined in \eqref{tK}.
%Explanation proposition below \PATRICK{goes to intro}:
\begin{prop}\label{prop:Pn}
Let $n \geq 2$, $T > 0$, and $(x^\circ,b^\circ)\in\R^{n}\times\{\pm1\}^n$ be such that $x^\circ_1<x^\circ_2<\ldots<x^\circ_n$. 
Then there exists a unique %set of trajectories
solution $(x,b)$ to Problem~\ref{problemPn} in the sense of Definition~\ref{defSolPn}.
%\MARCO{
%\begin{equation*}
%%x_1,\ldots,x_n\colon [0,T]\to\R \quad\text{such that} \quad 
%x_i \in \Lip([0, T]), \qquad %\text{for all $
%i=1,\ldots,n,%}
%\end{equation*}
%and a unique vector of collision times $\tau = (\tau_1, \ldots, \tau_n)$ with $\tau_i \in (0,T) \cup \{\infty\}$ such that, setting 
%\begin{equation}\label{108}
%T_{\col} 
%\coloneqq \{ \tau_i : 1 \leq i \leq n \} \setminus \{ \infty \}
%= \{t_1, t_2, \ldots, t_K\} 
%\subset (0, T)
%\end{equation}
%with $0 =: t_0 < t_1 < \ldots < t_K < t_{K+1} \coloneqq T$ and 
%\begin{equation}\label{107}
%b_i(t) \coloneqq b_i^\circ H (\tau_i - t)\qquad \text{for all $i=1,\ldots,n$},
%\end{equation}
%}
%there holds that 
Moreover, the following properties are satisfied:
%\begin{enumerate}[(i)]
%\MARCO{  \item \label{prop:Pn:Pn} for all $i = 1,\ldots,n$, $x_i(0) = x_i^\circ$ and
%  \begin{equation} \label{Pn}%\tag{$P_n$}
%  \frac \de{\de t} x_i = - \frac1{n} \sum_{ j \,:\, b_i b_j = 1 } V' (x_i - x_j) - \frac1{n} \sum_{ j\,:\, b_i b_j = -1 } W' (x_i - x_j) \quad \text{on $(0, T) \setminus T_{\col}$;}
%\end{equation}}
%\MARCO{  \item \label{prop:Pn:tk} for all $k = 1, \ldots, K$,
%  \[ t_k = \inf \big\{ t \in(0,T): \exists \, {(i,j)} : b_i(t_{k-1}) b_j(t_{k-1}) = -1 \text{ and } x_i(t) = x_j(t) \big\} > t_{k-1}; \]
%  \item \label{prop:Pn:ann:pairs} At each time $t \in [0,T]$, there is a bijection %between the index sets
%  \[ \alpha\colon \{ i : b_i^\circ = 1, \, \tau_i \leq t \} \to\{ j : b_j^\circ = -1, \, \tau_j \leq t \} \] 
%  such that %each pair $(i,j)$ satisfies 
%  $x_i(t) = x_{\alpha(i)}(t)$.}
%\end{enumerate}
%Moreover, setting $x \coloneqq (x_1, \ldots , x_n)\in \Lip([0, T]; \R^n)$ and $b \coloneqq (b_1, \ldots, b_n)$, %the (unique) solution to \eqref{Pn}, 
%there holds 
%\MARCO{M: I'm not sure about the wording: even in \eqref{Pn} we don't write the dependence of the $b_i$'s on time explicitly, yet here we call the pair $(x,b)$ a solution to \eqref{Pn}. We might want to be more clear about this} \PATRICK{Now this problem is solved, right?}
\begin{enumerate}[(i)] %\setcounter{enumi}{3}
\item \label{prop:Pn:bound:M2} there exists $C>0$ independent of $n$ such that 
$$
  M_2(x(t)) \leq C t + M_2(x^\circ), \quad
  M_4(x(t)) \leq C t(M_2(x^\circ) + t) + M_4(x^\circ)
  \quad\text{for all }t\in[0,T];
$$ 
\item \label{prop:Pn:sepn} $\displaystyle \inf_{0 < t < T} \min \{ | x_i (t) - x_j (t)| : b_i (t) b_j (t) = 1 \} > 0$;
  \item \label{prop:Pn:En} the energy function $e \colon [0,T) \to \R$ defined by $e(t) \coloneqq E_n(x(t); b(t))$ is left-continuous on $[0, T)$, differentiable on $(0,T) \setminus T_{\col}$, and $e'(t) \leq 0$ for all $t \in (0,T) \setminus T_{\col}$. 
  Moreover, denoting by $\llbracket e(t_k)\rrbracket\coloneqq e(t_k)-e(t_k-)$ the jump of $e$ at $t_k$, we have that 
 \begin{align}\label{109}
  \llbracket e(t_k) \rrbracket 
  &\leq \frac{C}{n} \bigg(\gamma_k M_2(x(t_k)) + \gamma_k + \sum_{i\in\Gamma_k} x_i^2(t_k)\bigg) \quad\text{for every $k=1,\ldots,K$,} \\ 
  \label{111}
\sum_{k=1}^K \llbracket e(t_k)\rrbracket
&\leq C (T + M_2(x^\circ) + 1),
  \end{align}
  where $\gamma_k$ and $\Gamma_k$ are defined in \eqref{tK}, and $C > 0$ is a constant independent of $n$;
%\MARCO{M (to self): find a more precise estimate? It sounds odd that we bound both the quantity in \eqref{109} and $K$ times that quantity by the same number -- unless we want to write the second bound by $W(0)/8$}  moreover, also $\sum_{k=1}^K \llbracket e(t_k)\rrbracket\leq \frac{W(0)}{4n}$;
% and $\sum_{k=1}^K \llbracket e(t_k) \rrbracket$ are bounded from above by $W(0)/(4n)$ for any $k$.
  %both $e(t_k) - e(t_k-)$ and $\sum_{k=1}^K e(t_k) - e(t_k-)$ are bounded from above by $W(0)/(4n)$ for any $k$.
  \item \label{prop:Pn:EDI} $\displaystyle E_n (x(t);b(t)) - E_n (x^\circ;b^\circ) \leq C (t + M_2(x^\circ) + 1) - \frac1n \int_0^t |\dot x(s)|^2\,\de s$ for all $t \in (0,T]$; %\PATRICK{[ed.: I realised we need this for any $t$ in Step 2 of Thm 5.1. HEnce, I replaced T by t in this line, and updated the proof by a lazy (but truthfully checked) remark.]}
  \item \label{prop:Pn:ass} there exists an $L \in \N$ such that for all $t \in [0, T)$, $(x(t), b(t))$ satisfies Assumption~\ref{ass:IC}, \emph{i.e.}, there exist $-\infty < a_0(t) \leq a_1(t) \leq \ldots \leq a_{2L}(t) < +\infty$ such that
\begin{equation*}
  \{ x_i(t) : b_i(t) = 1 \} 
  \subset \bigcup_{\ell = 1}^L \big( a_{2 \ell - 2}(t), a_{2 \ell - 1}(t) \big), \qquad
  \{ x_i(t) : b_i(t) = -1 \} 
  \subset \bigcup_{\ell = 1}^L \big( a_{2 \ell - 1}(t), a_{2 \ell}(t) \big).
\end{equation*}
\end{enumerate}
\end{prop}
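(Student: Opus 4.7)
The overall plan is to construct $(x,b)$ piecewise on the open intervals $(t_{k-1}, t_k)$ between consecutive collision times, and glue them together through the annihilation rule \eqref{107}. On each such interval, with $b$ held constant, \eqref{Pn} is a classical ODE whose right-hand side, by Lemma~\ref{lem:En}\eqref{lem:En:regy}, is locally Lipschitz on the sublevel sets of $E_n(\,\cdot\,; b) + 2M_2$, and these sublevels are bounded by Lemma~\ref{lem:En}\eqref{lem:En:bd:below}. Picard--Lindel\"of then yields a unique maximal classical solution along which
\begin{equation*}
  e'(t) = \nabla_x E_n(x(t); b(t)) \cdot \dot x(t) = -\tfrac{1}{n} |\dot x(t)|^2 \leq 0,
\end{equation*}
giving the monotonicity of $e$ on each open interval.

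The moment bounds \eqref{prop:Pn:bound:M2} follow by symmetrisation: since $V$ and $W$ are even,
\begin{equation*}
  \tfrac{d}{dt} M_2(x) = -\tfrac{1}{n^2}\sum_{b_i b_j = 1}(x_i - x_j)V'(x_i - x_j) - \tfrac{1}{n^2}\sum_{b_i b_j = -1}(x_i - x_j)W'(x_i - x_j),
\end{equation*}
which by \eqref{ass:VW:pbds} is uniformly bounded in $n$; the analogous identity for $M_4$ using $x_i^3 - x_j^3 = (x_i - x_j)(x_i^2 + x_i x_j + x_j^2)$ yields $|\tfrac{d}{dt}M_4| \leq C M_2$, and integration gives the asserted bounds. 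Because positions are continuous at collision times, both bounds extend globally.

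With these moment bounds, the only possible obstruction to extending the open-interval ODE is an opposite-sign collision: a same-sign pair approaching one another would force $V \to +\infty$ and contradict the monotonicity of $e$, while the $M_2$-bound rules out escape to infinity. This simultaneously yields \eqref{prop:Pn:sepn} and shows that collisions can only occur between $+/-$ pairs (so $\gamma_k$ is even, and no triple collisions take place). At the first such collision declare its time $t_k$, apply \eqref{107} by zeroing the charges of every particle in $\Gamma_k$, and restart. Since each restart removes at least two charges, the iteration terminates after at most $n/2$ steps, yielding \eqref{eq:sum:gammak} and existence on $[0,T]$; uniqueness is inherited from the piecewise ODEs and from the determinism of \eqref{107}. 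Property \eqref{prop:Pn:ass} then follows with $L(t) \leq L$ because same-sign particles never cross, so the $\pm$-interlacing can only simplify when annihilations occur at a boundary.

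The main obstacle will be the jump estimates \eqref{109}--\eqref{111} and the resulting dissipation inequality \eqref{prop:Pn:EDI}. Bound \eqref{109} is obtained by applying Lemma~\ref{lem:En}\eqref{lem:En:decr:at:jump} iteratively to each of the $\gamma_k/2$ annihilating pairs at $t_k$, noting that positions are frozen during the jump so that $M_2(x(t_k))$ is common to all applications. The summation to \eqref{111} is more delicate. After using $\sum_k \gamma_k \leq n/2$ and the $M_2$-bound to absorb the $\gamma_k M_2$ and $\gamma_k$ contributions, one is left with controlling $\tfrac{1}{n}\sum_k \sum_{i \in \Gamma_k} x_i^2(t_k)$. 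Since each $i$ belongs to at most one $\Gamma_k$, this is dominated by $\tfrac{1}{n}\sum_i \max_t x_i^2(t)$; one controls the latter by $x_i^2(t) \leq 2 (x_i^\circ)^2 + 2 |x_i(t) - x_i^\circ|^2$ together with the Cauchy--Schwarz bound $|x(t) - x^\circ|^2 \leq t \int_0^t |\dot x|^2 \, \de s$, which couples back to the identity
\begin{equation*}
  \tfrac{1}{n}\int_0^t |\dot x|^2 \, \de s = e(0) - e(t) + \sum_{t_k \leq t} \llbracket e(t_k)\rrbracket
\end{equation*}
obtained from the open-interval dissipation. Closing this loop by a Gronwall-type argument on small time intervals and iterating yields \eqref{111}; \eqref{prop:Pn:EDI} is then immediate, and left-continuity of $e$ at each $t_k$ follows because $x$ is Lipschitz and $b$ is piecewise constant on $[0,T] \setminus T_{\col}$.
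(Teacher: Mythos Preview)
Your proposal follows essentially the same architecture as the paper's proof: piecewise gradient-flow construction via Picard--Lindel\"of on sublevels of $E_n + 2M_2$, moment bounds by symmetrisation using \eqref{ass:VW:pbds}, same-sign separation from energy monotonicity and the singularity of $V$, and the jump bound \eqref{109} by iterated application of Lemma~\ref{lem:En}\eqref{lem:En:decr:at:jump}. On all of these points your sketch matches the paper.

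The one substantive difference is your route from \eqref{109} to \eqref{111}. To control $\tfrac{1}{n}\sum_k \sum_{i\in\Gamma_k} x_i^2(t_k)$ you pass through $\tfrac{1}{n}\sum_i \max_t x_i^2(t)$, bound the displacement by the dissipation integral, feed this back into the energy identity, and then appeal to a ``Gronwall-type argument on small time intervals and iterating''. This can probably be made rigorous, but the iteration is not spelled out and introduces an unnecessary circularity. The paper avoids all of this by a structural observation you overlooked: once particle $i$ is annihilated at time $t_k$, it is \emph{frozen}, so $x_i(t_k)=x_i(T)$ for every $i\in\Gamma_k$. Since the sets $\Gamma_k$ are pairwise disjoint, one gets directly
\[
  \frac{1}{n}\sum_{k=1}^K \sum_{i\in\Gamma_k} x_i^2(t_k)
  = \frac{1}{n}\sum_{i\in\bigcup_k \Gamma_k} x_i^2(T)
  \le M_2(x(T)),
\]
and \eqref{111} then follows immediately from the moment bound \eqref{prop:Pn:bound:M2}, with no Gronwall step and no small-time restriction. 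This also makes \eqref{prop:Pn:EDI} a one-line telescoping computation.
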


\begin{proof}
\textit{Step 1: Construction of $(x,b)$, properties \eqref{prop:Pn:bound:M2} and \eqref{prop:Pn:sepn}, and \eqref{109}}. We define the counterpart of \eqref{Pn} in which no collision occurs, \emph{i.e.}, we seek $n$ trajectories $y_i : [0, T] \to \R$ such that $y_i(0)=x_i^\circ$ and
\begin{equation} \label{p:Pn}
  \frac \de{\de t} y_i = - \frac1{n} \sum_{ j \,:\, b_i^\circ b_j^\circ = 1 } V' (y_i - y_j) - \frac1{n} \sum_{ j \,:\, b_i^\circ b_j^\circ = -1 } W' (y_i - y_j) \quad \text{on } (0,+ \infty).
\end{equation}
for all $i=1,\ldots,n$. From \eqref{f:En:der} we observe that \eqref{p:Pn} is the gradient flow of $E_n (\cdot; b^\circ)$ given by
\begin{equation}\label{p:PnGF}
\begin{cases}
\dot y(t)=-n\nabla E_n(y(t); b^\circ), \\
y(0)=x^\circ.
\end{cases}
\end{equation}
From Lemma~\ref{lem:En} we observe that \eqref{p:PnGF} has a unique, classical solution $y(t)$ locally in time. In particular, $t \mapsto E_n (y(t); b^\circ)$ %(y(t))$ 
is non-increasing.

Next we show that the solution $y$ can be extended to the complete time interval $[0,T]$. With this aim, we prove that the second moment $M_2(y(t))$ (and for later use  the fourth moment $M_4(y(t))$) are finite as long as $t\mapsto y(t)$ exists. From \eqref{p:Pn}, using \eqref{ass:VW:even} and \eqref{ass:VW:pbds}, we estimate
\begin{align*}
  \frac \de{\de t} M_2(y(t))
  %&= \MARCO{\frac2n \sum_{i=1}^n x_i \bigg( \frac1n \sum_{j \neq i} [-V'] \big( x_i - x_j; b_i b_j \big) \bigg)} \text{\MARCO{I don't understand this line: shouldn't there be $W'$ too?}}\\
  &= \frac2n \sum_{i=1}^n y_i(t)\dot y_i(t) \\
  &= -\frac2{n^2} \sum_{i=1}^n \bigg( \sum_{j \,:\, b_i b_j = 1} y_i V' ( y_i - y_j ) + \sum_{j \,:\, b_i b_j = -1} y_i W' ( y_i - y_j ) \bigg) \\
  &= -\frac1{n^2} \sum_{i, j \,:\, b_i b_j = 1} (y_i - y_j) V' ( y_i - y_j ) - \frac1{n^2} \sum_{i,j \,:\, b_i b_j = -1} (y_i - y_j) W' ( y_i - y_j ) 
%  \\
%  &\leq  C \MARCO{\Big( \frac{(n^+)^2}{n^2} + \frac{(n^-)^2}{n^2} \Big)}  
  \leq C,
\end{align*}
%\MARCO{where we recall that $n^\pm = \mu_n^{\circ,\pm} (\R) \leq n$.}
Hence, 
\begin{equation}\label{105}
M_2(y(t)) 
\leq M_2(y(0)) + Ct 
\leq M_2(x^\circ) + CT
, \qquad \text{for all $t \in [0, T]$.}
\end{equation}
Similarly, using the identity $a^3 - b^3 = (a^2 + ab + b^2)(a - b)$, we compute %on $(t_{k-1}, t_k)$
\begin{align*}
  \frac \de{\de t} M_4(y(t))
  %&= \frac4n \sum_{i=1}^n x_i^3 \bigg( \frac1n \sum_{j \neq i} [-V'] \big( x_i - x_j; b_i b_j \big) \bigg) \\
  &= \frac4n \sum_{i=1}^n y_i^3(t)\dot y_i(t) \\
  &= -\frac4{n^2} \sum_{i=1}^n \bigg( \sum_{j \,:\, b_i b_j = 1} y_i^3 V' ( y_i - y_j ) + \sum_{j \,:\, b_i b_j = -1} y_i^3 W' ( y_i - y_j ) \bigg) \\
  &= -\frac2{n^2} \sum_{i, j \,:\, b_i b_j = 1} (y_i^3 - y_j^3) V' ( y_i - y_j ) - \frac2{n^2} \sum_{i,j \,:\, b_i b_j = -1} (y_i^3 - y_j^3) W' ( y_i - y_j ) \\
 % &= \frac1{n^2} \sum_{i, j : b_j b_j = 1} -(x_i - x_j) V' ( x_i^3 - x_j^3 ) - \frac1{n^2} \sum_{j : b_j b_j = -1} (x_i^3 - x_j^3) W' ( x_i - x_j ) \\
 &\leq \frac C{n^2} \sum_{i, j \,:\, b_i b_j = 1} (y_i^2 +y_i y_j + y_j^2) + \frac C{n^2} \sum_{i,j \,:\, b_i b_j = -1} (y_i^2 +y_i y_j + y_j^2) \\
  &\leq \frac{C}{n^2} \sum_{i = 1}^n \sum_{j = 1}^n (y_i^2(t) + y_j^2(t))
  = C M_2(y(t)) \leq C(t + M_2(x^\circ)),
\end{align*}
where we have used \eqref{105}.
Hence, 
\begin{equation}\label{106}
M_4(y(t)) \leq  M_4(x^\circ) + CT \big (M_2(x^\circ) + T \big), \qquad \text{for all $t\in[0,T]$.} 
\end{equation}
In conclusion, \eqref{105} and \eqref{106} provide a priori bounds for $M_2(y(t))$ and $M_4(y(t))$ that are uniform in $n$ and $t$. 
%Since $E_n (x^\circ; b^\circ)$ is finite, 
Finally, from \eqref{105} and Lemma~\ref{lem:En}\eqref{lem:En:infty}--\eqref{lem:En:regy} we obtain that the solution $y$ to \eqref{p:PnGF} is defined and unique at least up to time $T$. %with initial condition $x^\circ$. 

Next we identify $t_1$ and choose those $b_i$ that jump at $t = t_1$ (see \eqref{107}). For this choice, it is enough to specify the collision times $\tau_i$ (see \eqref{108}). We note that
$$t^* := \inf \big\{ t \in(0,T]: \exists \, {(i,j)} : b_i^\circ b_j^\circ = -1 \text{ and } y_i(t) = y_j(t) \big\}$$
is either attained or $t^* = +\infty$. % and (of course) positive. 
If $t^* \geq T$, we set $x = y$ and $\tau_i = +\infty$ for all $i$, and observe that properties \eqref{prop:Pn:tk} and \eqref{prop:Pn:ann:pairs} of Definition~\ref{defSolPn} are satisfied. 
If $t^* < T$, we observe that
%(with $x$ replaced by $y$). 
$t_1$ in Definition~\ref{defSolPn}\eqref{prop:Pn:tk} has to be equal to $t^*$. %this minimum, 
We set $x |_{[0,t_1]} \coloneqq y|_{[0,t^*]}$ and observe from \eqref{105} and \eqref{106} that property \eqref{prop:Pn:bound:M2} is satisfied up to $t=t_1$.
%has to hold. 
For the choice of $\tau_i$, we follow the algorithm explained in Section~\ref{s:in:Pn}, \emph{i.e.}, for each pair of particles that collide at $t_1$, we set the corresponding $\tau_i$ equal to $t_1$. We choose the remaining values for $\tau_j > t_1$ later on in the construction. With this choice for $\tau_i$, it follows from the continuity of $x_i$ that properties \eqref{prop:Pn:tk} and \eqref{prop:Pn:ann:pairs} of Definition~\ref{defSolPn} are satisfied by construction. Since $E_n (x(t)) \leq E_n (x^\circ)$ %is bounded 
for all $t \in [0, t_1)$, it follows 
%from 
%\MARCO{the properties of $V$ and $W$ and \eqref{prop:Pn:bound:M2}} %Lemma~\ref{lem:En}\eqref{lem:En:infty} 
that \eqref{prop:Pn:sepn} holds on $[0, t_1]$. 
%\PATRICK{[ed.: giving the precise references for this statement in terms of earlier parts of propostions, makes the reading slower, while this statement is quite intuitive from the properties on V, W, En. I therefore chose speed over accuracy.]}

Next we show that we can continue the construction above for $t > t_1$. First, applying
Lemma~\ref{lem:En}\eqref{lem:En:decr:at:jump} $\frac12 \gamma_1$ times (recall from \eqref{tK} that $\gamma_1$ is even), we find that 
\[
  E_n(x(t_1); b(t_1)) 
  \leq E_n(x(t_1); b(t_1-)) +\frac{C}{2n}\bigg(\gamma_1 M_2(x(t_1)) + \gamma_1 + \sum_{i\in\Gamma_1}x_i^2(t_1)\bigg).  
\]   % \leq E_n (x^\circ; b^\circ) + M_k\frac{|W(0)|}{2n^2} < \infty.
Hence, \eqref{109} is satisfied for $k=1$. Furthermore, we obtain that $E_n(x(t_1); b(t_1)) < \infty$, and thus we can continue the construction above for $t > t_1$ by putting $x(t_1), b(t_1)$ as the initial condition at $t = t_1$. 

Iterating over $k$, this construction identifies all $\tau_i < T$ (for $i \notin \cup_{k=1}^K \Gamma_k$, we set $\tau_i\coloneqq+\infty$) and $t_k$, and guarantees that $x$ is piecewise $\mathrm{C}^1$ on $[t_k, t_{k+1}]$ and globally Lipschitz. 
In addition, \eqref{109} holds for all $k=1,\ldots,K$.
\smallskip

\textit{Step 2: Uniqueness of $(x,b)$}. Let $x$ and $\tau$ be as constructed in Step 1, and set $b$ accordingly. Since \eqref{p:PnGF} has a unique solution,  Definition~\ref{defSolPn}\eqref{prop:Pn:tk} defines uniquely the time $t_1$ until which $x(t)$ is uniquely defined. By Definition~\ref{defSolPn}\eqref{prop:Pn:ann:pairs}, $b$ has to be constant on $[0,t_1)$. Since $x$ satisfies Property \eqref{prop:Pn:sepn} at $t = t_1$, all collisions at $t_1$ are collisions of two particles with opposite type. Then, from the explanation in Remark \ref{comments:to:Pn}, it is obvious that properties \eqref{prop:Pn:tk} and \eqref{prop:Pn:ann:pairs} of Definition~\ref{defSolPn} define uniquely the set of indices $i$ for which $\tau_i = t_1$. Hence, $b(t_1)$ is uniquely determined. We conclude by iterating over $k$.

\textit{Step 3: The remaining Properties \eqref{prop:Pn:En}--\eqref{prop:Pn:ass}}. 
Estimate \eqref{109} is already proved; 
summing over $k$ reads 
\begin{equation} \label{p:etk:summed}
\sum_{k=1}^K \llbracket e(t_k) \rrbracket 
\leq \frac{C}{n} \Big( \sum_{k=1}^K \gamma_k M_2(x(t_k)) + \sum_{k=1}^K \gamma_k + \sum_{k=1}^K \sum_{i\in\Gamma_k} x_i^2(t_k) \Big). 
\end{equation}
The first and second sums in the right-hand side above can be easily estimated using \eqref{prop:Pn:bound:M2} and \eqref{eq:sum:gammak}.
We estimate the third sum by using that the sets $\Gamma_k$ for $k=1,\ldots,K$ are disjoint, and that 
for every $k=1,\ldots,K$ and for every $i\in\Gamma_k$ we have that $x_i(t)=x_i(t_k)$ for all $t\geq t_k$. Hence, the third sum is bounded by $M_2(x(T))$. Collecting our estimates, we obtain \eqref{111} from \eqref{p:etk:summed}. 

With \eqref{prop:Pn:En} proven, we prove \eqref{prop:Pn:EDI} for $t = T$ by the following computation (the case $t < T$ follows by a similar estimate). Setting $t_{K+1}\coloneqq T$, we compute
  \begin{equation*}
  \begin{split}
   E_n (x(T);b(T)) - E_n (x^\circ;b^\circ) ={} & E_n(x(T);b(T)) - E_n(x(t_K); b(t_K)) \\
   &+ \sum_{k=1}^K \big[ \llbracket e(t_k) \rrbracket %- e(t_k-) 
     + \big( E_n(x(t_k-);b(t_k-)) - E_n(x(t_{k-1});b(t_{k-1})) \big) \big] \\
     \leq{} & \sum_{k=1}^{K+1} \int_{t_{k-1}}^{t_k} \frac \de{\de t} E_n (x(t);b(t)) \, \de t + C (T + M_2(x^\circ) + 1) \\
     ={} & - \sum_{k=1}^{K+1} \frac1n \int_{t_{k-1}}^{t_k} |\dot x(t)|^2\,\de t + C (T + M_2(x^\circ) + 1) \\
     ={} & - \frac1n \int_0^T |\dot x(t)|^2\,\de t + C (T + M_2(x^\circ) + 1).
   \end{split}
   \end{equation*} 
%The second-to-last equality follows from Lemma~\ref{lem:En}\eqref{lem:En:der} and \eqref{Pn}; the last equality follows from the definition of $t_0$ an $t_{K+1}$.

Finally, we prove \eqref{prop:Pn:ass}. First, we claim that the strict ordering of the particles $\{x_i(t) : |b_i(t)| = 1 \}$ is conserved in time. Clearly, this ordering holds at $t = 0$. From \eqref{prop:Pn:sepn} it follows that any two particles, say with corresponding indices $i \neq j$ such that $b_i(t) b_j(t) = 1$, can never swap position. Similarly, any pair $(x_i(t), x_j(t))$ with $b_i(t) b_j(t) = -1$ cannot swap either, because \eqref{prop:Pn:tk} ensures that $b_i(t)$ and $b_j(t)$ jump to $0$ at the first $t$ at which $x_i(t) = x_j(t)$.

Next we construct $a_\ell (t)$. We start with $t = 0$, and set $a_0(0), a_1(0), \ldots$ sequentially. We set $a_0(0) \coloneqq x_1^\circ - 1$, and, if $b_1^\circ = -1$, we also put $a_1(0) \coloneqq x_1^\circ - 1$. For each pair of consecutive particles $x_i^\circ, x_{i+1}^\circ$ of opposite sign, we define a new point 
\[
a_\ell(0) \coloneqq \frac12 (x_i^\circ + x_{i+1}^\circ).
\]
If the current value of $\ell$ is odd, we define $L := (\ell + 1)/2$
and set $a_{2L}(0) \coloneqq x_n^0+1$. If  $\ell$ is even, we define $L := (\ell + 2)/2$ and set $a_{2L-1}(0) \coloneqq a_{2L}(0) \coloneqq x_n^\circ+1$. 

Since the strict ordering of the particles $\{x_i(t) : |b_i(t)| = 1 \}$ is conserved in time, we can construct $a_\ell(t)$ analogously, but for a time-dependent $L_t$. Next we show how to modify this construction such that $L_t$ can be chosen independently of $t$. Because of the ordering of $\{x_i(t) : |b_i(t)| = 1 \}$ and that its cardinality is non-increasing in time, the numbers of pairs of consecutive particles $x_i(t), x_{i+1}(t)$ of opposite non-zero charge is also non-increasing in time. Hence, $t \mapsto L_t$ is non-increasing in time. In case $L_t < L$, we modify the construction of $a_\ell (t)$ above simply by adding a surplus of points $a_\ell (t)$ which all equal $a_{2L_t} (t)$.
\end{proof}

Next we establish several properties of the empirical measures associated to the solution $(x;b)$ of Problem~\ref{problemPn} with initial condition $(x^\circ,b^\circ)$ as in Proposition~\ref{prop:Pn}. With this aim, we set $n^\pm := \{ i : b_i^\circ = \pm1 \}$ as the number of positive/negative particles at time $0$, and note that $n^+ + n^- = n$. The empirical measures associated to $(x(t); b(t))$ are
\begin{equation} \label{mupn}
  \mu_n^{\pm,\circ} \coloneqq \frac1n \sum_{i \,:\, b_i^\circ = \pm1} \delta_{x_i^\circ}, \qquad
  \mu_n^\pm(t) \coloneqq \frac1n \sum_{i \,:\, b_i^\circ = \pm1} \delta_{x_i(t)},
\end{equation}
which both have total mass equal to $n^\pm / n$ for all $t \in [0,T)$.
As in \eqref{kap:rho:trho}, we also set
\begin{equation} \label{kapn:mun:tmun}
  \kappa_n(t) \coloneqq \frac1n \sum_{i=1}^{n} b_i^\circ \delta_{x_i(t)}, \qquad
  \mu_n(t) \coloneqq \frac1n \sum_{i=1}^{n} \delta_{x_i(t)},  \qquad
  \tilde \mu_n^\pm (t) \coloneqq [\kappa_n(t)]_\pm.
\end{equation}

%5 xn's mun's props

\begin{prop}[Proposition~\ref{prop:Pn} in terms of measures] \label{prop:muPn}
Given the setting as in Proposition~\ref{prop:Pn} with $(x, b)$ the solution to \eqref{Pn}, let $\bmu_n\coloneqq (\mu_n^+,\mu_n^-)$, $\tilde \bmu_n\coloneqq (\tilde\mu_n^+,\tilde\mu_n^-)$, and $\kappa_n$ as constructed from $(x, b)$ through \eqref{mupn} and \eqref{kapn:mun:tmun}. Then,
\begin{enumerate}[(i)]
  \item \label{prop:muPn:muka:rel} $\displaystyle\tilde \mu_n^\pm(t) = %[\kappa_n]_\pm = 
  \frac1n \sum_{i=1}^{n} [b_i(t)]_\pm \delta_{x_i(t)}$; 
  % and $\tilde \mu_n^+ - \tilde \mu_n^- = \kappa_n $;

  \item \label{prop:muPn:met:der} $\bmu_n \in \AC^2 (0,T; \mathcal P_2^m({\R^2}))$ with $m = n^+/n$, and 
  \begin{equation}\label{104}%\displaystyle
  |\bmu_n'|_\bW^2 (t) \leq \frac1n \sum_{i=1}^n \Big( \frac \de{\de t} x_i (t) \Big)^2
  \quad \text{for all } 0<t<T;
  \end{equation}
  \item \label{prop:muPn:wP} $\bmu_n$ is a solution to $\eqref{P}$ with initial condition $\bmu_n^\circ=(\mu_n^{+,\circ},\mu_n^{-,\circ})$.
\end{enumerate}
\end{prop}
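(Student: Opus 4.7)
The plan is to prove the three statements in order; part (i) is the bridge between the two encodings of annihilation (via the initial charges $b_i^\circ$ or the current charges $b_i(t)$), part (ii) is a direct transport computation, and part (iii) amounts to testing \eqref{Pn} against smooth $\varphi^\pm$ and recognising the result as the weak formulation \eqref{wP}.

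For (i), I would unfold $\kappa_n(t) = \frac{1}{n}\sum_i b_i^\circ \delta_{x_i(t)}$ and split the sum according to whether $\tau_i \leq t$ or $\tau_i > t$. By Definition~\ref{defSolPn}\eqref{prop:Pn:ann:pairs}, the contributions from $\{\tau_i \leq t\}$ pair up, via the bijection $\alpha$, into couples of opposite initial charge sitting at the same point, and therefore cancel. What remains equals $\frac{1}{n}\sum_i b_i(t)\delta_{x_i(t)}$ by \eqref{107}. To read off the positive and negative parts, I need that non-annihilated particles of opposite sign occupy distinct locations at time $t$: same-sign coincidences are forbidden by Proposition~\ref{prop:Pn}\eqref{prop:Pn:sepn}, while an opposite-sign coincidence would force both charges to zero by Definition~\ref{defSolPn}\eqref{prop:Pn:tk}, contradicting that the particles are non-annihilated.

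For (ii), I would use the particle-to-particle coupling $\bgamma_{s,t} := \frac{1}{n}\sum_i \delta_{((x_i(s),b_i^\circ),(x_i(t),b_i^\circ))} \in \bGamma(\bmu_n(s),\bmu_n(t))$. Since the charges agree within each pair, $\sfd^2$ collapses to $|x_i(s) - x_i(t)|^2$, and Cauchy--Schwarz applied to $x_i(t) - x_i(s) = \int_s^t \dot x_i(r)\,\de r$ yields
\[
  \bW^2(\bmu_n(s),\bmu_n(t)) \leq \frac{1}{n}\sum_i |x_i(s) - x_i(t)|^2 \leq (t-s)\int_s^t \frac{1}{n}\sum_i \dot x_i(r)^2 \, \de r.
\]
Since $x$ is Lipschitz on $[0,T]$, the integrand on the right is in $L^\infty(0,T)$, giving $\bmu_n \in \AC^2$; dividing by $(t-s)^2$ and sending $t \to s$ at Lebesgue points of $r \mapsto n^{-1}\sum_i \dot x_i(r)^2$ produces \eqref{104}.

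For (iii), fix $\varphi^+ \in \rC_c^\infty((0,T)\times\R)$. Each $t \mapsto \varphi^+(t,x_i(t))$ is Lipschitz and compactly supported in $(0,T)$, so the chain rule and the fundamental theorem of calculus give $\int_0^T \partial_t\varphi^+(t,x_i(t))\,\de t = -\int_0^T (\varphi^+)'(t,x_i(t))\dot x_i(t)\,\de t$. Summing over $\{i : b_i^\circ = 1\}$ and observing that $\dot x_i \equiv 0$ whenever $b_i(t) = 0$ (both sums in \eqref{Pn} are empty), the right-hand side can be rewritten as an integral against $[\kappa_n(t)]_+$ thanks to part~(i). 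Substituting \eqref{Pn} for $\dot x_i$ then produces two double sums: the $V$-sum, running over $\{b_i = b_j = 1\}$, is symmetrised using the oddness of $V'$ (from \eqref{ass:VW:even}) into the double integral against $[\kappa_n]_+ \otimes [\kappa_n]_+$ that appears in \eqref{wP}, while the $W$-sum becomes $\int (\varphi^+)'(x)(W' \ast [\kappa_n]_-)(x)\,\de[\kappa_n]_+(x)$ directly. The equation for $\mu_n^-$ follows by an identical argument with signs swapped. I do not expect a substantial obstacle; the subtlest point is the clean cancellation of annihilated pairs in (i), which is precisely what the bijection in Definition~\ref{defSolPn}\eqref{prop:Pn:ann:pairs} delivers.
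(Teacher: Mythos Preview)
Your proposal is correct and follows essentially the same approach as the paper: for (i) the paper cites Proposition~\ref{prop:Pn}\eqref{prop:Pn:ass} and Definition~\ref{defSolPn}\eqref{prop:Pn:ann:pairs} to obtain the two inequalities $[\kappa_n]_\pm \geq \frac1n\sum_i[b_i]_\pm\delta_{x_i}$ and $|\kappa_n|(\R)\leq\frac1n\sum_i|b_i|$, which is just a repackaging of your direct cancellation argument; for (ii) the paper first splits via \eqref{bW:est} and couples $\mu_n^\pm$ separately rather than using a single coupling on $\R\times\{\pm1\}$, but this is an immaterial variation; and (iii) is argued exactly as you outline.
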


\begin{proof}
Property \eqref{prop:muPn:muka:rel} is a corollary of Proposition~\ref{prop:Pn}. 
Indeed, Proposition~\ref{prop:Pn}\eqref{prop:Pn:ass} implies that $[\kappa_n(t)]_\pm \geq \frac1n \sum_{i=1}^{n} [b_i(t)]_\pm \delta_{x_i(t)}$, while Definition~\ref{prop:Pn}\eqref{prop:Pn:ann:pairs} implies that $|\kappa_n(t)| (\R) \leq \frac1n \sum_{i=1}^{n} |b_i(t)|$. We conclude \eqref{prop:muPn:muka:rel}. 
\smallskip

Next we prove \eqref{prop:muPn:met:der}. From the definition of $\bmu_n$ in \eqref{mupn} we observe that $\bmu_n(t) \in 
\mathcal P_2^m({\R^2})$ for all $0 < t < T$. Hence, \eqref{bW:est} applies, and we obtain
\begin{equation} \label{p:bW:ito:Wpm}
  \bW^2 \big( \bmu_n (s), \bmu_n (t) \big) 
  \leq  W^2 \big( \mu_n^+ (s), \mu_n^+ (t) \big) + W^2 \big( \mu_n^- (s), \mu_n^- (t) \big)
  \quad \text{for all } 0 < s \leq t < T.
\end{equation}
To estimate the right-hand side, we let $0 < s \leq t < T$ be given, and introduce the coupling
\begin{equation*} %\label{lem:disc:metric:der:charzn:pf:0}
  \gamma_n^\pm 
\coloneqq \frac1n \sum_{i\,:\, b_i^\circ = \pm 1} \delta_{(x_i (s), x_i (t) ) } 
\in \Gamma \big( \mu_n^\pm (s), \mu_n^\pm (t) \big).
\end{equation*}
By definition of the Wasserstein distance \eqref{W2}, we obtain
\begin{equation} \label{lem:disc:metric:der:charzn:pf:1}
  W^2 \big( \mu_n^\pm (s), \mu_n^\pm (t) \big)
  \leq \iint_{{\R^2} \times {\R^2}} |x - y|^2 \, \de\gamma_n^\pm (x, y)
  = \frac1n \sum_{i\,:\,b_i^\circ = \pm 1} 
  \big( x_i(s) - x_i(t) \big)^2.
\end{equation}
Finally, using in sequence the estimates \eqref{eqn:defn:metric:slope:chap7}, \eqref{p:bW:ito:Wpm} and \eqref{lem:disc:metric:der:charzn:pf:1}, we conclude \eqref{104}. Since $x \in \Lip ([0,T]; \R^n)$, we obtain that $\bmu_n \in \AC^2 (0,T; \mathcal P_2^m (\R^2 \times \{\pm 1\} ))$. 
\smallskip

Next we prove \eqref{prop:muPn:wP}. We rewrite \eqref{Pn} as
\begin{align*}
  \dot x_i (t) 
  &= -b_i(t) \big( V' * \tilde \mu_n^+(t) + W' * \tilde \mu_n^-(t) \big)(x_i(t)), 
  && \text{for $i$ such that  $b_i^\circ = 1$,} \\
  \dot x_i (t) 
  &= -b_i(t) \big( W' * \tilde \mu_n^+(t) + V' * \tilde \mu_n^-(t) \big)(x_i(t)),
  && \text{for $i$ such that  $b_i^\circ = -1$.}
\end{align*}
Let $\varphi \in \rC_c^\infty((0,T) \times \R)$ be any test function. Since $x_i$ is Lipschitz, the Fundamental Theorem of Calculus applies, and thus we obtain, using \eqref{prop:muPn:muka:rel}, 
\begin{align*}
  0 
  &= \frac1n \sum_{i \,:\, b_i^\circ = 1} \int_0^T \frac \de{\de t} \varphi (t, x_i(t)) \,\de t %\\
  %&
  = \frac1n \sum_{i \,:\, b_i^\circ = 1} \bigg[ \int_0^T \partial_t \varphi (x_i) \,\de t + \int_0^T \varphi' (x_i) \, \dot x_i \,\de t \bigg] \\
  &= \int_0^T \int_\R \partial_t \varphi \, \de \mu_n^+ \de t - \int_0^T \frac1n \sum_{i \,:\, b_i = 1} \varphi' (x_i) \, \big( V' * \tilde \mu_n^+ + W' * \tilde \mu_n^- \big)(x_i) \, \de t \\
  &= \int_0^T \int_\R \partial_t \varphi \, \de \mu_n^+ \de t - \int_0^T \int_\R \varphi' \, \big( V' * [\kappa_n]_+ + W' * [\kappa_n]_- \big) \, \de [\kappa_n]_+ \de t.
\end{align*}
Since $\varphi$ is arbitrary and $V'$ is odd, we conclude that $\mu_n^+$ satisfies \eqref{wP}. From a similar argument, it follows that also $\mu_n^-$ satisfies \eqref{wP}.
\end{proof}

\section{Statement and proof of the main convergence theorem}
\label{s:t}

In this section, we state and prove our main convergence theorem.

\begin{thm} [Discrete-to-continuum limit] \label{t} 
Let the potentials $V$ and $W$ satisfy Assumption~\ref{ass:VW}. 
Let $(x^{n,\circ}, b^{n,\circ})_n$ be a sequence of initial conditions such that 
\begin{enumerate}[(i)]
\item \label{En:bdd} $E_n(x^{n,\circ};b^{n,\circ})$ is bounded uniformly in $n$, 
\item \label{M24:bdd} $(\bmu_n^\circ)_n$ (see \eqref{mupn}) has bounded fourth moment uniformly in $n$, %and such that 
\item there exists an $L \in \N$ independent of $n$ such that Assumption~\ref{ass:IC} is satisfied for all $n$.
\end{enumerate}
Then for every $T > 0$ the curves $\bmu_n \in \AC^2(0, T; \mathcal P_2 (\R \times \{\pm1\}) )$ determined by the solution $(x^n, b^n)$ to Problem~\ref{problemPn} through \eqref{mupn} for each $n$, converge in measure uniformly in time along a subsequence to a solution $\brho$ of \eqref{wP}, whose initial condition $\brho_\circ$ is the limit of $(\bmu_n^\circ)_n$ along the same subsequence.
\end{thm}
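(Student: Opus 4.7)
The plan is to exploit the observation in Proposition~\ref{prop:muPn}\eqref{prop:muPn:wP} that each $\bmu_n$ already solves the continuum weak equation \eqref{wP}; the task then reduces to a stable passage to the limit in this equation, with the delicate point being the identification of $\lim_n \tilde\mu_n^\pm$ as $[\kappa]_\pm$.

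\emph{Step 1 (uniform bounds and compactness).} From hypothesis \eqref{En:bdd} combined with the energy-dissipation inequality of Proposition~\ref{prop:Pn}\eqref{prop:Pn:EDI} and the second-moment bound of Proposition~\ref{prop:Pn}\eqref{prop:Pn:bound:M2}, I would deduce a uniform estimate $\int_0^T \tfrac{1}{n} |\dot x^n|^2 \,\de t \leq C$. By Proposition~\ref{prop:muPn}\eqref{prop:muPn:met:der} this transfers to a uniform bound on $\int_0^T |\bmu_n'|_{\bW}^2 \,\de t$ and hence to H\"older equicontinuity $\bW(\bmu_n(s), \bmu_n(t)) \leq C |t - s|^{1/2}$. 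Combined with the uniform fourth-moment bound coming from hypothesis \eqref{M24:bdd} and Proposition~\ref{prop:Pn}\eqref{prop:Pn:bound:M2} (which yields pointwise tightness of $\{\bmu_n(t)\}$ in $\mathcal{P}_2(\R \times \{\pm 1\})$ via Markov's inequality), Ascoli--Arzel\`a (Lemma~\ref{lem:AA}) extracts a subsequence and a curve $\brho \in \rC([0,T]; \mathcal{P}_2(\R\times\{\pm 1\}))$ with $\bmu_n(t) \to \brho(t)$ in $\bW$ uniformly in $t \in [0,T]$.

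\emph{Step 2 (identification $\tilde\rho^\pm = [\kappa]_\pm$ --- the main obstacle).} Since annihilated pairs share positions, the contributions of annihilated mass to $\mu_n^+$ and $\mu_n^-$ cancel in the difference, so $\kappa_n = \tilde\mu_n^+ - \tilde\mu_n^-$ with $\tilde\mu_n^+ \perp \tilde\mu_n^-$: by Proposition~\ref{prop:Pn}\eqref{prop:Pn:ass}, $\tilde\mu_n^+(t)$ and $\tilde\mu_n^-(t)$ are supported respectively in $\bigcup_\ell [a_{2\ell-2}^n(t), a_{2\ell-1}^n(t)]$ and $\bigcup_\ell [a_{2\ell-1}^n(t), a_{2\ell}^n(t)]$ with $L$ \emph{independent of $n$}. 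A diagonal extraction produces, for a countable dense set of times $t$, limits $a_\ell^n(t) \to a_\ell(t)$; Lemma~\ref{lem:supp:sep} then transfers the support separation to the narrow limits $\tilde\rho^\pm(t)$. Together with $\tilde\rho^+ - \tilde\rho^- = \kappa$, uniqueness of the Jordan decomposition gives $\tilde\rho^\pm = [\kappa]_\pm$, once common atomic masses at the finitely many shared endpoints are excluded (a side argument presumably using the logarithmic singularity of $V$ to rule out concentration of $\tilde\rho^\pm$ at isolated points). This step, which crucially relies on the one-dimensional setting and on the uniform separation Assumption~\ref{ass:IC}, is the principal obstacle of the proof; without the $n$-independent bound $L$ on the number of sign alternations, small-scale oscillations between $\pm 1$ phases could make $\lim_n |\kappa_n|$ strictly larger than $|\kappa|$.

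\emph{Step 3 (passage to the limit in \eqref{wP}).} The $W'$-terms pass directly to the limit by the narrow convergence $\tilde\mu_n^\pm \weakto \tilde\rho^\pm$ from Step~2, using that $W'$ is bounded and Lipschitz (consequences of Assumption~\ref{ass:VW}). For the possibly singular $V'$-terms I would exploit the antisymmetrised form already present in \eqref{wP}: the kernel $K(x,y) \coloneqq \bigl((\varphi^\pm)'(x) - (\varphi^\pm)'(y)\bigr)\, V'(x-y)$ is bounded by $\|(\varphi^\pm)''\|_\infty \cdot \|r V'(r)\|_{L^\infty}$ and continuous off the diagonal $\Delta = \{x = y\}$ --- this is Schochet's trick from \cite{Schochet96}. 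Lemma~\ref{l:Schwartz81} applied to the narrow convergence $\tilde\mu_n^\pm \otimes \tilde\mu_n^\pm \weakto \tilde\rho^\pm \otimes \tilde\rho^\pm$ then yields convergence of $\iint K \,\de(\tilde\mu_n^\pm \otimes \tilde\mu_n^\pm)$, provided $(\tilde\rho^\pm \otimes \tilde\rho^\pm)(\Delta) = 0$, i.e.\ $\tilde\rho^\pm$ is atomless (a side condition to verify in tandem with Step~2). Integrating in time identifies $\brho$ as a weak solution of \eqref{wP}, and the matching of initial conditions $\brho^\circ = \lim_n \bmu_n^\circ$ is immediate from the uniform $\bW$-convergence up to $t = 0$.
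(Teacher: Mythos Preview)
Your proposal is correct and follows essentially the same three-step strategy as the paper: compactness via energy dissipation and moment bounds, identification of $\lim_n[\kappa_n]_\pm$ as $[\kappa]_\pm$ using the uniform separation structure, and passage to the limit in \eqref{wP} via Schochet's antisymmetrisation trick together with Lemma~\ref{l:Schwartz81}.

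One technical point in Step~2 is worth sharpening. Your proposed diagonal extraction of the separators $a_\ell^n(t)$ along a countable dense set of times would leave you having to extend the identification to all $t$, which is awkward because $t\mapsto\tilde\mu_n^\pm(t)$ jumps at collision times. The paper instead fixes $t$ and argues that \emph{any} narrow sublimit $\tilde\mu^\pm$ of $(\tilde\mu_{n_k}^\pm(t))_k$ must equal $[\kappa(t)]_\pm$: it decomposes $\tilde\mu_n^\pm$ into the $2L$ pieces $\tilde\mu_n^\ell$ supported on the intervals $(a_{\ell-1}^n,a_\ell^n)$, passes each piece to a narrow limit, and applies Lemma~\ref{lem:supp:sep} directly to these measures (never tracking the $a_\ell^n$ themselves). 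Since $[\kappa(t)]_\pm$ is already determined by the $\brho(t)$ obtained in Step~1, the sublimit is independent of the further subsequence, whence the full sequence $(\tilde\mu_{n_k}^\pm(t))_k$ converges for every $t\in[0,T]$. Your no-atoms side argument (energy bound plus the singularity of $V$ at $0$) is exactly what the paper uses to rule out shared mass at the finitely many support-overlap points.
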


The proof is divided in three steps. In the first step we use compactness of $\bmu_n (t)$ to extract a subsequence $n_k$ along which $\bmu_n (t)$ converges to some $\brho (t)$. In the remaining two steps we pass to the limit in \eqref{wP} as $k \to \infty$ to show that the limiting curve $\brho (t)$ also satisfies \eqref{wP}. Step 2 contains the main novelty; relying on Assumption~\ref{ass:IC} with an $n_k$-independent number $L$, we prove that $[\kappa_{n_k} (t)]_\pm \weakto [\kappa (t)]_\pm$ as $k \to \infty$ pointwise in $t$.

\begin{proof}
\textit{Step 1: $\bmu_n$ converges along a subsequence $n_k \to \infty$ in $\rC([0,T]; \mathcal P_2 (\R \times \{\pm1\}))$ to $\brho \in \AC^2 (0,T; \mathcal P_2^m (\R \times \{\pm1\}))$ with $m := \rho^{\circ, +} (\R)$.} We prove this statement by means of the Ascoli-Arzel\`a Theorem (see Lemma~\ref{lem:AA}) applied to the metric space $(\mathcal P_2 (\R \times \{\pm1\}), \bW)$.

First, we show that, for fixed $t \in [0,T]$, the sequence $(\bmu_n(t))_n$ is pre-compact in $\mathcal P_2 (\R \times \{\pm1\})$. From the assumption on the initial data and Proposition~\ref{prop:Pn}\eqref{prop:Pn:bound:M2} we observe that the second and fourth moments of the measures $\mu_n(t)$ defined in \eqref{kapn:mun:tmun}, given by
\begin{equation*}
  M_2 (x^n(t)) = \int_\R y^2 \,\de\mu_n(t)(y), \quad\quad
  M_4 (x^n(t)) = \int_\R y^4 \,\de\mu_n(t)(y),
\end{equation*}
are bounded uniformly in $n$ and $t \in [0, T]$. Then, from \cite[Lemma~B.3]{VanMeursMuntean14} and \cite[Proposition~7.1.5]{AmbrosioGigliSavare08} we find that $(\bmu_n(t))_n$ is pre-compact in the Wasserstein distance $\bW$.

Second, we show that the sequence $(\bmu_n)_n \subset \mathrm{C}([0,T]; \mathcal P_2 (\R \times \{\pm1\}))$ is equicontinuous (\emph{i.e.}, $(\bmu_n)_n$ satisfies Lemma~\ref{lem:AA}\eqref{equicontinuous}). 
For any $0 \leq s < t \leq T$, we estimate
\begin{equation}\label{400}
  \bW^2 \big( \bmu_n (t), \bmu_n (s) \big)
  \leq \bigg( \int_s^t |\bmu_n'|_\bW(r)\, \de r \bigg)^2
  \leq (t-s) \int_0^T |\bmu_n'|_\bW^2(r)\,\de r.
\end{equation}
To estimate the last integral above, we use consecutively the estimates in Proposition~\ref{prop:muPn}\eqref{prop:muPn:met:der}, %the energy dissipation inequality given by 
Proposition~\ref{prop:Pn}\eqref{prop:Pn:EDI}, %the boundedness of $E_n(x^{n,\circ}; b^{n,\circ})$, % \leq C$, 
%hypothesis \eqref{En:bdd},
%the estimate on $E_n(x^n(T);b^n(T))$ provided by 
Lemma~\ref{lem:En}\eqref{lem:En:bd:below} and Proposition~\ref{prop:Pn}\eqref{prop:Pn:bound:M2} %and hypothesis \eqref{M24:bdd} 
to obtain 
\begin{equation}\label{401}
\begin{split}
\int_0^T |\bmu_n'|_\bW^2(r)\,\de r\leq{} & \frac1n\int_0^T\sum_{i=1}^n \Big(\frac{\de}{\de t} x_i^n(r)\Big)^2\de r=\frac1n \int_0^T |\dot x^n(r)|^2\,\de r \\
\leq{} & C (T + M_2 (x^{n,\circ}) + 1) + E_n(x^{n,\circ};b^{n,\circ}) - E_n(x^n(T);b^n(T)) \\
\leq{} & C (T + M_2 (x^{n,\circ}) + 1) + E_n(x^{n,\circ};b^{n,\circ}).
\end{split}
\end{equation}
which, by the assumptions on the initial data, is bounded uniformly in $n$. Hence, the right-hand side in \eqref{400} 
is bounded by $C (t-s)$, and thus $(\bmu_n)_n$ is equicontinuous.

From the pre-compactness of $(\bmu_n(t))_n$ and the equicontinuity of $(\bmu_n)_n$, we obtain from Lemma~\ref{lem:AA} the existence of a subsequence $n_k$ along which $(\bmu_n)_n$ converges in $\mathrm{C}([0,T]; \mathcal P_2 (\R \times \{\pm1\}))$ to some limiting %time 
curve $\brho \in \mathrm{C}([0,T]; \mathcal P_2 (\R \times \{\pm1\}))$. 
In fact, %by Proposition \ref{prop:muPn}\eqref{prop:muPn:met:der}, Proposition \ref{prop:Pn}\eqref{prop:Pn:EDI} and 
combining the lower semi-continuity obtained in Theorem~\ref{thm:lsc} with \eqref{401}, we obtain that 
$\brho \in \mathrm{AC}^2 (0,T; \mathcal P_2 (\R \times \{\pm1\}))$. Moreover, since the total mass of $\mu_n^+(t)$ is conserved in time, and since the narrow topology conserves mass, we conclude that $\brho(t) \in \mathcal{P}_2^m (\R \times \{\pm1\})$ for all $t \in [0,T]$. This completes the proof of Step 1. For later use, we set as in \eqref{kap:rho:trho}
\begin{equation*}
  \rho \coloneqq \rho^+ + \rho^-, \qquad
  \kappa \coloneqq \rho^+ - \rho^-, \qquad
  \tilde \rho^\pm \coloneqq [\kappa]_\pm.
\end{equation*}

\smallskip

\textit{Step 2: $\tilde \bmu_{n_k} (t) \weakto \tilde \brho (t)$ as $k \to \infty$ pointwise for all $t \in [0, T]$.} We set $\tilde \mu_{n_k}^\pm = [\kappa_{n_k}]_\pm$ as in \eqref{kapn:mun:tmun}.  
%\PATRICK{[ed.: I felt here we should be more precise that no further subsequence needs to be taken for $\tilde \bmu_{n_k} (t)$ to converge. This is important, because otherwise we may get a different subsequence for each time $t$, and then we cannot pass to the limit in Step 3]} 
We keep $t \in [0, T]$ fixed, and remove it from the notation in the remainder of this step. The structure of the proof of Step 2 is to show by compactness that $(\tilde \bmu_{n_k})_k$ has a converging subsequence, and to characterise the limit as $\tilde \brho$. Since $\tilde \brho$ is independent of the choice of subsequence, we then conclude that the full sequence $(\tilde \bmu_{n_k})_k$ converges to $\tilde \brho$. Keeping this in mind, in the following we omit all labels of subsequences of $n$.

Since the second moments of $\tilde \bmu_n$ are obviously bounded by $M_2 (x^n)$, the sequence $(\tilde \bmu_n)$ is tight, and thus, by Prokhorov's Theorem, $(\tilde \bmu_n)$ converges narrowly along a subsequence to some $\tilde \bmu \in \mathcal M_+ (\R \times \{\pm1\})$.

We claim that $\tilde \bmu$ does not have atoms.
We reason by contradiction. 
Suppose that $\tilde \mu^+$ has an atom at $y$ of mass $\alpha > 0$ (the case of $\tilde \mu^-$ can be treated analogously). 
Then, setting $B_\eta(y)$ as the ball around $y$ with radius $\eta$, we infer from $\tilde \mu_n^+ \weakto \tilde \mu^+$ that $\liminf_{n \to \infty} \tilde \mu_n^+ (B_\eta(y)) \geq \alpha > 0$ for any $\eta > 0$. 
By choosing $\eta > 0$ small enough, the contribution of the particles in $B_\eta(y)$ to the energy $E_n(x^n; b^n)$ can be made arbitrarily large, which contradicts with the uniform bound on $E_n(x^n; b^n)$ given by Proposition~\ref{prop:Pn}\eqref{prop:Pn:EDI}.

In the remainder of this step we show that $\tilde \mu^\pm = [\kappa]_\pm$, regardless of the choice of the subsequence. It is enough to show that
\begin{align} \label{p:S2:1}
  [\kappa]_\pm &\leq \tilde \mu^\pm \\\label{p:S2:2}
  [\kappa]_\pm (\R) &\geq \tilde \mu^\pm (\R)
\end{align}
Regarding \eqref{p:S2:1}, we obtain
from Step 1 that
\begin{equation*}
  \tilde \mu_n^+ - \tilde \mu_n^- 
  = \kappa_n 
  \weakto \kappa \quad \text{as } n \to \infty.
\end{equation*}
Hence, $\tilde \mu^+ - \tilde \mu^- = \kappa$, which implies \eqref{p:S2:1}. % we could use Rudin Cor p126 to back up the last statement here
To prove \eqref{p:S2:2}, we let $\{ a_\ell^n \}_{\ell = 0}^{2L}$ be as in Proposition~\ref{prop:Pn}\eqref{prop:Pn:ass}, and set
\begin{equation*}
  \tilde \mu_n^\ell := \left\{ \begin{array}{ll}
    \tilde \mu_n^+ |_{(a_{\ell - 1}^n, a_\ell^n)}
    & \ell \text{ odd} \\
    \tilde \mu_n^- |_{(a_{\ell - 1}^n, a_\ell^n)}
    & \ell \text{ even}
  \end{array} \right.
\end{equation*}
for all $\ell \in \{1, \ldots, 2L\}$. By construction, 
\[ \sum_{\ell = 1}^L \tilde \mu_n^{2\ell - 1} = \tilde \mu_n^+
  \quad \text{and} \quad
  \sum_{\ell = 1}^L \tilde \mu_n^{2\ell} = \tilde \mu_n^-. \]
Together with $\tilde \bmu_n \weakto \tilde \bmu$, we conclude that $(\tilde \mu_n^\ell)_n$ are tight for any $\ell$, and thus, applying Prokhorov's Theorem once more, each sequence $(\tilde \mu_n^\ell)_n$ converges along a subsequence in the narrow topology to some $\tilde \mu^\ell \in \mathcal M_+ (\R)$. In particular, from $\tilde \bmu_n \weakto \tilde \bmu$ and 
\[ \tilde \mu_n^- = \sum_{\ell = 1}^L \tilde \mu_n^{2\ell} \weakto \sum_{\ell = 1}^L \tilde \mu^{2\ell}, \]
we infer that $\tilde \mu^- = \sum_{\ell = 1}^L \tilde \mu^{2\ell}$. By a similar argument, it  follows that $\tilde \mu^+ = \sum_{\ell = 1}^L \tilde \mu^{2\ell - 1}$. Finally, since $\sup (\supp \tilde \mu_n^\ell ) < \inf (\supp \tilde \mu_n^{\ell+1} )$ for all $1 \leq \ell \leq 2L - 1$, we obtain from Lemma~\ref{lem:supp:sep} that $\sup (\supp \tilde \mu^\ell ) < \inf (\supp \tilde \mu^{\ell+1} )$ for all $1 \leq \ell \leq 2L - 1$. Hence, there exists $A := \{ a_\ell \}_{\ell = 1}^{2L-1}$ such that
\begin{multline*} %\label{p:nupm:disj:supp}
  \supp \tilde \mu^+ \cap \supp \tilde \mu^- 
  = \bigg( \bigcup_{\ell = 1}^L \supp \tilde \mu^{2 \ell - 1} \bigg) \cap \bigg( \bigcup_{k = 1}^L \supp \tilde \mu^{2 k} \bigg) \\
  = \bigcup_{\ell = 1}^L \bigcup_{k = 1}^L \big( \supp \tilde \mu^{2 \ell - 1} \cap \supp \tilde \mu^{2 k} \big)
  = \bigcup_{\ell = 1}^{2L-1} \big( \supp \tilde \mu^\ell \cap \supp \tilde \mu^{\ell + 1} \big)
  \subset A.
\end{multline*}
Since $\tilde \mu^\pm$ does not have atoms, $\tilde \mu^\pm(A) = 0$. Together with $\tilde \mu^+ - \tilde \mu^- = \kappa$, it is easy to construct a Hahn decomposition of $\kappa$ (see, \emph{e.g.}, \cite[Theorem~6.14]{Rudin87}). 
We conclude \eqref{p:S2:2}.
\smallskip

\textit{Step 3: $\brho$ is a solution to \eqref{P}.} To ease notation, we replace $n_k$ by $n$. We show that $\brho$ satisfies \eqref{wP}. With this aim, let $\varphi^\pm \in \mathrm{C}_c^\infty((0,T) \times \R)$ be arbitrary. 
We recall from Proposition~\ref{prop:muPn}\eqref{prop:muPn:wP} that $\bmu_n$ satisfies
\begin{equation} \label{p:wP}
\begin{split}
0 = & \int_0^T \int_\R \partial_t \varphi^\pm (x) \, \de \mu_n^\pm(x) \de t - \int_0^T \int_\R (\varphi^\pm)'(x) \, (W' * [\kappa_n]_\mp)(x) \, \de [\kappa_n]_\pm (x) \de t \\
&- \frac12 \int_0^T \iint_{\R \times \R} \big( (\varphi^\pm)' (x) - (\varphi^\pm)' (y) \big) \, V' (x - y) \, \de([\kappa_n]_\pm \otimes [\kappa_n]_\pm)(x,y) \de t.
\end{split} 
\end{equation}

We show that we can pass to the limit in all three terms separately. From Step 1 it follows that $\bmu_n \weakto \brho$, and thus the limit of the first integral equals
\begin{equation*}
  \int_0^T \int_\R \partial_t \varphi^\pm (x) \, \de \rho^\pm(x) \de t.
\end{equation*}
Regarding the other two integrals in \eqref{p:wP}, we recall from Step 2 that $[ \kappa_n (t) ]_\pm \weakto [ \kappa (t) ]_\pm$ as $n \to \infty$ pointwise for all $t \in [0, T]$. Then, for the second term, since $(x,y) \mapsto (\varphi^\pm)'(x) \, W'(x-y)$ is bounded and continuous on $\R^2$, we obtain that
\begin{multline*}
  \int_\R (\varphi^\pm)'(x) \, (W' * [\kappa_n]_\mp)(x) \, \de [\kappa_n]_\pm (x)
  = \iint_{\R^2} (\varphi^\pm)'(x) \, W'(x-y) \, \de([\kappa_n]_\pm \otimes [\kappa_n]_\mp)(x,y) \\
  \xto{n \to \infty} \iint_{\R^2} (\varphi^\pm)'(x) \, W'(x-y) \, \de([\kappa]_\pm \otimes [\kappa]_\mp)(x,y)
  = \int_\R (\varphi^\pm)'(x) \, (W' * [\kappa]_\mp)(x) \, \de [\kappa]_\pm (x).
\end{multline*}
%\PATRICK{[ed.: Although we are using Fubini to move some integrals around, and the DCT to prove the convergence pointwise in time, the conditions of these theorems are trivially satisfied in our case. Hence, I chose speed over spilling the details. Also, the proof above on the W-term is renewed, and less technical then the previous one with the Lipschitz norm. This has led to removing the Lipschitz norm stuff from \S 3]}.
Finally, we pass to the limit in the third integral in \eqref{p:wP}. 
We employ Lemma~\ref{l:Schwartz81} with $d = 2$ and $\Delta = \{ (y,y) : y \in \R \}$ the diagonal in $\R^2$. 
To show that the conditions of Lemma~\ref{l:Schwartz81} are satisfied, we observe from the fact that $r \mapsto r V'(r)$ is bounded and %contained in 
{belongs to} $\mathrm{C}(\R \setminus \{0\})$, it holds that $(x,y) \mapsto [ (\varphi^\pm)' (x) - (\varphi^\pm)' (y) ] \, V' (x - y)$  is bounded and %contained in 
{belongs to} $\mathrm{C}(\R^2 \setminus \Delta)$. 
Moreover, by Step 2, $( [ \kappa ]_\pm \otimes [ \kappa ]_\pm ) (\Delta) = (\tilde \mu^\pm \otimes \tilde \mu^\pm)(\Delta)=0$. 
Hence, by Lemma~\ref{l:Schwartz81} we can pass to the limit in the third term in \eqref{p:wP}, whose limit reads
\begin{equation*}
  - \frac12 \int_0^T \iint_{\R \times \R} \big( (\varphi^\pm)' (x) - (\varphi^\pm)' (y) \big) \, V' (x - y) \, \de ([\kappa]_\pm \otimes [\kappa]_\pm)(x,y) \de t.
\end{equation*}
Combining the three limits above, and recalling the time regularity of $\brho$ from Step 1, we conclude that $\brho$ is a solution to \eqref{P}.
\end{proof}

%\MARCO{
%\begin{rem}
%In Step 1 of the proof of Theorem~\ref{t} we show that the fourth moments of $\bmu_n(t)$ and $\brho(t)$ are bounded uniformly in $n \in \N$ and $t \in [0, T]$. In fact, it is easy to extend this argument by induction to conclude that all moments are bounded uniformly in $n \in \N$ and $t \in [0, T]$.
%\end{rem}
%}

\medskip

\noindent\textbf{Acknowledgments}.
The authors wish to thank the Department of Mathematics at Kanazawa University and the Zentrum Mathematik at Technische Universit\"at M\"unchen, where this research was developed.
PvM is supported by the International Research Fellowship of the Japanese Society for the Promotion of Science, together with the JSPS KAKENHI grant 15F15019.
MM is a member of the Gruppo Nazionale per l'Analisi Matematica, la Probabilit\`a e le loro Applicazioni (GNAMPA) of the Istituto Nazionale di Alta Matematica (INdAM).
Moreover, he acknowledges partial support from the ERC Starting grant \emph{High-Dimensional Sparse Optimal Control} (Grant agreement no.: 306274) and the DFG Project \emph{Identifikation von Energien durch Beobachtung der zeitlichen Entwicklung von Systemen} (FO 767/7).

%\PATRICK{[Remaining task: references: There are 2 clashes between our ways of citing the literature. I abbreviated the first name of the authors; I made this consistent with yours. You abbreviate Journal names; could you take care of making the journal names consistent? I don't mind full name or abbreviation; both fine by me.]}

\bibliographystyle{alpha} 
\bibliography{refsPatrick}

\end{document}